\newtheorem{lem}{Lemma}
\newtheorem{theorem}{Theorem}
\newtheorem{corollary}{Corollary}
\newcommand{\tref}[1]{Theorem \ref{theorem:#1}}
\newcommand{\lref}[1]{Lemma \ref{lem:#1}}
\newcommand{\fref}[1]{Figure \ref{fig:#1}}
\newcommand{\cref}[1]{Conjecture \ref{conjecture:#1}}
\newcommand{\coref}[1]{Corollary \ref{corollary:#1}}
\newcommand{\la}{\lambda}
\newcommand{\sg}{\sigma}
\newcommand{\ga}{\gamma}
\newcommand{\Ga}{\Gamma}
\newcommand{\inv}{\mathrm{inv}}
\newcommand{\coinv}{\mathrm{coinv}}
\newcommand{\red}{\mathrm{red}}
\newcommand{\ret}{\mathrm{ret}}
\newcommand{\linv}{\mathrm{linv}}
\newcommand{\Sn}[1]{\mathcal{S}_{#1}}
\newcommand{\Snn}{\mathcal{S}_{n}}
\newcommand{\Dn}[1]{\mathcal{D}_{#1}}
\newcommand{\Dnn}{\mathcal{D}_{n}}
\newcommand{\LRmin}{\mathrm{LRmin}}
\newcommand{\fillll}[3]{\node at (#1-.5,#2-.5) {$#3$}}
\newcommand{\filllll}[2]{\node at (#1-.5,#2-.5) {$#2$}}
\newcommand{\fillcross}[2]{\draw[thick] (#1-1,#2-1)--(#1,#2);\draw[thick] (#1-1,#2)--(#1,#2-1)}
\newcommand{\fillgcross}[2]{\draw[thick,green!50!black] (#1-1,#2-1)--(#1,#2);\draw[thick,green!50!black] (#1-1,#2)--(#1,#2-1)}
\newcommand{\thn}[1]{$#1^\textnormal{th}$}
\newcommand{\occr}{\mathrm{occr}}
\newcommand{\coarea}{\mathrm{coarea}}
\newcommand{\area}{\mathrm{area}}	
\newcommand{\Abar}{\overline{A}}
\author{Dun Qiu\affiliationmark{1}
  \and Jeffrey Remmel\affiliationmark{2}
}
\title[Classical patterns in $\mathcal{S}_{n}(132)$ and $\mathcal{S}_{n}(123)$]{Classical pattern distributions in\\ $\mathcal{S}_{n}(132)$ and $\mathcal{S}_{n}(123)$}
\affiliation{
  Department of Mathematics, Beijing Jiaotong University, Beijing, P. R. China\\
  Department of Mathematics, University of California San Diego, La Jolla, USA}
\keywords{permutation statistics, classical patterns, Catalan numbers, Dyck paths}
\begin{document}
\publicationdetails{21}{2019}{2}{4}{5088}
\maketitle
\begin{abstract}
Classical pattern avoidance and occurrence are well studied in the symmetric group $\mathcal{S}_{n}$. In this paper, we provide explicit recurrence relations to the generating functions counting the number of classical pattern occurrence in the set of 132-avoiding permutations and the set of 123-avoiding permutations.
\end{abstract}

\section{Introduction}
Let $\Snn$ denote the set of permutations of size $n$. Given a sequence $w = w_1 \cdots w_n$ of distinct integers, we
let $\red(w)$ be the permutation that we replace the
$i$-th smallest integer in $\sg$ with $i$.  For
example, $\red(4685) = 1342$.  Given a
permutation $\tau=\tau_1 \cdots \tau_j$ in $\Sn{j}$, we say that the pattern $\tau$ {\em occurs} in $\sg = \sg_1 \cdots \sg_n \in \Sn{n}$ if   there exist $1 \leq i_1 < \cdots < i_j \leq n$ such that 
$\red(\sg_{i_1} \cdots \sg_{i_j}) = \tau$.   We say 
that a permutation $\sg$ {\em avoids} the pattern $\tau$ if $\tau$ does not 
occur in $\sg$. In the theory of permutation patterns,  $\tau$ is called a {\em classical pattern}. 

We let $\Sn{n}(\tau)$ denote the set of permutations in $\Sn{n}$ 
which avoid $\tau$. If $\Gamma$ is a collection of permutations, then we let $\Sn{n}(\Gamma)$ denote the set of permutations in $\Sn{n}$ 
that avoid each permutation in $\Gamma$. We write $\occr_{\tau}(\sg)$ for the number of the pattern $\tau$ occurrences in $\sg$. For example, the permutation $\sg=867943251$ avoids the pattern $132$, while it contains the pattern $123$ and $\occr_{123}(\sg)=1$ since only the subsequence $6,7,9$ matches $123$. 

Classical patterns have been studied separately for a long
time. It is well known that for all $n \geq 1$, 
$|\Snn(132)|=|\Snn(123)|=C_n$, where $C_n = \frac{1}{n+1}\binom{2n}{n}$ is the 
$n^{\mathrm{th}}$ Catalan number.
\cite{M1,M2} enumerated the number of permutations in $\Snn$ avoiding 2 classical patterns. \cite{MV2,MV3} enumerated the number of permutations in $\Snn(132)$ or $\Snn(123)$ that has $0$ or $1$ occurrence of a certain pattern. See \cite{kit} for a comprehensive introduction to patterns in permutations.
However, there is not much research about the distribution of classical patterns in $\Snn(\tau)$. \cite{MV1} gave a continued fraction form of the generating function of the distribution of the pattern $12\cdots k$ in $\Snn(132)$.
Very recently, \cite{J1,J2} studied patterns in random permutations avoiding 132 or 123 in a probabilistic way.
\cite{PQR} studied consecutive pattern matches in $\mathcal{S}_n(132)$ and $\mathcal{S}_n(123)$.

Given two sets of permutations $\Lambda=\{\la_1,\ldots,\la_r\}$ and $\Gamma = \{\ga_1,\ldots,\ga_s\}$, it is natural to study the 
distribution of classical patterns $\ga_1,\ldots,\ga_s$ in $\Sn{n}(\Lambda)$. That is, we want to 
study generating functions of the form 
\begin{equation}
Q_{\Lambda}^{\Gamma}(t,x_1,\ldots,x_s) : = 1 + \sum_{n\geq 1} t^n  Q_{n,\Lambda}^{\Gamma}(x_1,\ldots,x_s),
\end{equation}
where
\begin{equation}
Q_{n,\Lambda}^{\Gamma}(x_1,\ldots,x_s) : = \sum_{\sg\in\Sn{n}(\Lambda)} x_1^{\occr_{\gamma_1}(\sg)}\cdots x_s^{\occr_{\gamma_s}(\sg)}.
\end{equation}
When $\Lambda=\{\la\}$ and $\Gamma=\{\ga\}$ are singletons, we write
\begin{equation}
Q_{\lambda}^{\gamma}(t,x) : = 1 + \sum_{n\geq 1} t^n  Q_{n,\lambda}^{\gamma}(x)\textnormal{ \ and \ }Q_{n,\lambda}^{\gamma}(x) : = \sum_{\sg\in\Sn{n}(\lambda)} x^{\occr_{\gamma}(\sg)}.
\end{equation}
The main goal of this paper is to study the distribution of 
classical patterns in the set of 132-avoiding permutations and the set of  123-avoiding permutations using a recursive method.

To study the generating functions $Q_\lambda^\gamma(t,x)$ when $\lambda$ is $132$ or $123$, we shall first study the symmetries in $\Sn{n}(132)$ and $\Sn{n}(123)$.
Given a permutation $\sg = \sg_1 \sg_2 \ldots \sg_n \in \Snn$, we let $\sg^r$ be the \emph{reverse} of $\sg$ defined by $\sg^r = \sg_n \ldots \sg_2 \sg_1$, and $\sg^c$ be the \emph{complement} of $\sg$ defined by $\sg^c = (n+1 -\sg_1) (n+1-\sg_2) \ldots (n+1 -\sg_n)$. Further, we let $\sg^{rc}=(\sg^r)^c$ be the \emph{reverse-complement} of $\sg$, and $\sg^{-1}$ be the \emph{inverse} of $\sg$. For example, for $\sg=15324$, we have $\sg^r=42351,\ \sg^c=51342,\ \sg^{rc}=24315$, and  $\sg^{-1}=14352$.

The actions above give several symmetries about classical pattern distributions in $\Snn(\la)$, and we have the following lemma.

\ 
\begin{lem}\label{lem:1}For any permutations $\la$ and $\gamma$, we have
\begin{equation*}
	Q_{\la}^\gamma(t,x)=Q_{\la^{*}}^{\gamma^{*}}(t,x),
\end{equation*}
	where $*$ is one of the actions $r,\ c,\ rc$ or $-1$.
\end{lem}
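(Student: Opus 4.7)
The plan is to reduce the lemma to a single intertwining claim: for every permutation $\tau$, every $\sg\in\Snn$, and every $*\in\{r,c,rc,-1\}$,
\[
\occr_\tau(\sg)=\occr_{\tau^*}(\sg^*).
\]
Once this is granted, specializing to $\tau=\la$ shows that the involution $\sg\mapsto\sg^*$ restricts to a bijection $\Snn(\la)\to\Snn(\la^*)$, while specializing to $\tau=\ga$ shows that the weight $x^{\occr_\ga(\sg)}$ is preserved under the relabeling $\ga\mapsto\ga^*$. Summing over $\Snn(\la)$ then gives $Q_{n,\la}^{\ga}(x)=Q_{n,\la^*}^{\ga^*}(x)$ for every $n$, and multiplying by $t^n$ and summing on $n$ yields the lemma.

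For the three positional/value symmetries, the intertwining claim is a direct unpacking of definitions. Suppose $\red(\sg_{i_1}\cdots\sg_{i_j})=\tau$ with $i_1<\cdots<i_j$. The same entries, viewed in $\sg^r$ at the positions $n+1-i_j<\cdots<n+1-i_1$, appear in the opposite order and so reduce to $\tau^r$; viewed in $\sg^c$ at the original positions, they become $(n+1-\sg_{i_1})\cdots(n+1-\sg_{i_j})$, which reverses the relative order of the values and so reduces to $\tau^c$. Each correspondence is an involution on the set of index subsets producing occurrences, and the reverse-complement case follows by composing the two.

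The only case requiring care is the inverse, where positions and values swap roles. Write $\tau^{-1}=\tau'_1\cdots\tau'_j$, so that sorting $\sg_{i_1},\ldots,\sg_{i_j}$ by value produces $\sg_{i_{\tau'_1}}<\cdots<\sg_{i_{\tau'_j}}$. Setting $v_m=\sg_{i_{\tau'_m}}$, we have $v_1<\cdots<v_j$ and, by the definition of $\sg^{-1}$, $\sg^{-1}_{v_m}=i_{\tau'_m}$. Since $i_1<\cdots<i_j$, the reduction of the subsequence $i_{\tau'_1}\cdots i_{\tau'_j}$ of $\sg^{-1}$ is precisely $\tau'_1\cdots\tau'_j=\tau^{-1}$. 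Running the same construction starting from $\sg^{-1}$ and using $(\sg^{-1})^{-1}=\sg$ confirms that this gives a bijection between occurrences of $\tau$ in $\sg$ and occurrences of $\tau^{-1}$ in $\sg^{-1}$. This index bookkeeping in the inverse case is the only mildly delicate point in the argument; no deeper ideas are needed.
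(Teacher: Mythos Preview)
Your proof is correct and follows essentially the same approach as the paper: both establish that each involution $*$ gives a bijection $\Snn(\la)\to\Snn(\la^*)$ under which $\occr_\ga$ becomes $\occr_{\ga^*}$. The paper only writes out the reverse case and asserts the others are similar, whereas you give the full index bookkeeping for the inverse case as well; this extra detail is welcome but does not change the underlying argument.
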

\begin{proof}
	We shall only prove the case when $*=r$. All the other cases are based on similar proofs.
	
	The action reverse bijectively sends a permutation $\sg\in\Snn(\la)$ to a permutation $\sg^r\in\Snn(\la^r)$, while each occurrence of $\ga$ in $\sg$ is sent to an occurrence of $\ga^r$ in $\sg^r$,  thus $\occr_{\ga^{r}}(\sg^{r})=\occr_{\ga}(\sg)$, and it follows immediately that $Q_{\la}^\gamma(t,x)=Q_{\la^{r}}^{\gamma^{r}}(t,x)$.
\end{proof}

When the permutation $\la$ is of length three, since $123=123^{rc}=123^{-1}$ and $132=132^{-1}$, we have the following corollary.

\ 
\begin{corollary}Given any permutation pattern $\gamma$, we have
\begin{equation*}
	Q_{123}^\gamma(t,x)=Q_{123}^{\gamma^{rc}}(t,x)=Q_{123}^{\gamma^{-1}}(t,x), \ \ \  Q_{132}^\gamma(t,x)=Q_{132}^{\gamma^{-1}}(t,x).
\end{equation*}
\end{corollary}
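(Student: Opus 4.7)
The plan is to apply Lemma \ref{lem:1} directly, specializing $\lambda$ to $123$ or $132$ and $*$ to the appropriate symmetry, and then to verify that each of the length-three patterns listed is a fixed point of the corresponding action. Once that verification is in place, every claimed identity is an instance of $Q_{\lambda}^{\gamma}(t,x)=Q_{\lambda^{*}}^{\gamma^{*}}(t,x)$ with $\lambda^{*}=\lambda$.

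So first I would verify the three fixed-point claims. For the reverse-complement action applied to $123$: reversing $123$ gives $321$, and complementing $321$ with $n=3$ gives $(4-3)(4-2)(4-1)=123$, so $123^{rc}=123$. For the inverse action: $123$ is the identity permutation of length three, hence $123^{-1}=123$. For $132$: the permutation sends $1\mapsto 1$, $2\mapsto 3$, $3\mapsto 2$, and its inverse sends $1\mapsto 1$, $3\mapsto 2$, $2\mapsto 3$, i.e.\ it is again $132$. These short computations are all there is to check; none of them is an obstacle.

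With the fixed points established, I would invoke Lemma \ref{lem:1} three times. Taking $\lambda=123$ and $*=rc$ yields $Q_{123}^{\gamma}(t,x)=Q_{123^{rc}}^{\gamma^{rc}}(t,x)=Q_{123}^{\gamma^{rc}}(t,x)$. Taking $\lambda=123$ and $*=-1$ yields $Q_{123}^{\gamma}(t,x)=Q_{123^{-1}}^{\gamma^{-1}}(t,x)=Q_{123}^{\gamma^{-1}}(t,x)$. Taking $\lambda=132$ and $*=-1$ yields $Q_{132}^{\gamma}(t,x)=Q_{132^{-1}}^{\gamma^{-1}}(t,x)=Q_{132}^{\gamma^{-1}}(t,x)$. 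Concatenating the first two gives the chain of equalities in the $123$ half of the statement, and the third is the $132$ half, completing the proof.

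Since the whole argument is an immediate corollary, there is no genuine obstacle; the only thing worth emphasizing in the write-up is the computation of the three fixed points, which is why the statement is restricted to $\lambda\in\{123,132\}$ and to exactly these symmetries (for instance, $132^{r}=231\neq 132$, so reverse does not give a self-symmetry on $\Sn{n}(132)$).
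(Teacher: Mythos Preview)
Your proof is correct and matches the paper's approach exactly: the paper simply notes that $123=123^{rc}=123^{-1}$ and $132=132^{-1}$ and declares the result a corollary of Lemma~\ref{lem:1}. Your write-up just makes the fixed-point verifications and the three applications of the lemma explicit.
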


When $\gamma$ is a pattern of length three, we have the following corollary.

\ 
\begin{corollary}We have the following equations,
	\begin{equation*}
	Q_{132}^{231}(t,x)=Q_{132}^{312}(t,x),\qquad
	Q_{123}^{132}(t,x)=Q_{123}^{213}(t,x),\quad\mbox{and}\quad
	Q_{123}^{231}(t,x)=Q_{123}^{312}(t,x).
	\end{equation*}
\end{corollary}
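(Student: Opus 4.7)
The plan is to apply Lemma \ref{lem:1} three times, once for each equation, choosing in each case a symmetry $* \in \{r,c,rc,-1\}$ that fixes the avoided pattern ($132$ or $123$) and sends the counted pattern to the desired one. So the whole proof reduces to a short case check on the six length-$3$ patterns, which is purely computational.

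First I would record the action of each symmetry on length-$3$ patterns in a small table. In particular, I would verify: $132^{-1}=132$ and $231^{-1}=312$; $123^{rc}=123$ and $132^{rc}=213$; $123^{-1}=123$ and $231^{-1}=312$. These are the three pairs of identities that are needed.

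Then I would apply Lemma \ref{lem:1} to each equation separately. For $Q_{132}^{231}(t,x)=Q_{132}^{312}(t,x)$, take $*=-1$: since $132^{-1}=132$ and $231^{-1}=312$, the lemma gives exactly this identity. For $Q_{123}^{132}(t,x)=Q_{123}^{213}(t,x)$, take $*=rc$: since $123^{rc}=123$ and $132^{rc}=213$, the lemma yields the equality (one could alternatively invoke the preceding corollary, which already packages the fact that $Q_{123}^\gamma=Q_{123}^{\gamma^{rc}}$). For $Q_{123}^{231}(t,x)=Q_{123}^{312}(t,x)$, take $*=-1$: since $123^{-1}=123$ and $231^{-1}=312$, the result follows from the lemma (again this is really the $\gamma \leftrightarrow \gamma^{-1}$ clause of the previous corollary applied to $\gamma=231$).

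There is essentially no obstacle here; the only thing that could go wrong is a miscomputation of $\sg^{-1}$ or $\sg^{rc}$ on a length-$3$ pattern, so I would double-check those six explicit computations by writing out the one-line permutations. I would then present the proof as three single-line invocations of Lemma \ref{lem:1} (or of the preceding corollary), with the symmetry chosen in each line.
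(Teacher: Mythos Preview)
Your proposal is correct and follows exactly the approach implied by the paper: the corollary is stated there without an explicit proof, as an immediate specialization of Lemma~\ref{lem:1} (equivalently, of the preceding corollary) to length-three patterns, and your three case checks ($231^{-1}=312$ with $132^{-1}=132$; $132^{rc}=213$ with $123^{rc}=123$; $231^{-1}=312$ with $123^{-1}=123$) are precisely what is needed.
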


Considering the distribution of patterns of length three, we only need to study the following $4$ generating functions for $\Sn{n}(132)$,
\begin{enumerate}[(1)]
	\item $Q_{132}^{123}(t,x)$,
	\item $Q_{132}^{213}(t,x)$,
	\item $Q_{132}^{231}(t,x)=Q_{132}^{312}(t,x)$,
	\item $Q_{132}^{321}(t,x)$,
\end{enumerate} 
and the following $3$ generating functions for $\Sn{n}(123)$,
\begin{enumerate}[(1)]
	\item $Q_{123}^{132}(t,x)=Q_{123}^{213}(t,x)$,
	\item $Q_{123}^{231}(t,x)=Q_{123}^{312}(t,x)$,
	\item $Q_{123}^{321}(t,x)$.
\end{enumerate}

It is easy to check that all the $7$ generating functions are different when looking at $\Sn{8}(132)$ and $\Sn{8}(123)$. Our motivation of this paper is to study the $7$ generating functions above, and then generalize some of the results. For example, we show that the function 
\begin{equation*}Q_n(x_1,\ldots,x_7)
:=Q_{n,132}^{\{12,21,123,213,231,312,321\}}(x_1,\ldots,x_7)
\end{equation*}
 satisfies the recursion that $Q_0(x_1,\ldots,x_7) =1$, and
\begin{multline*}
Q_{n}(x_1,\ldots,x_7)=\sum_{k=1}^{n} x_1^{k-1} x_2^{k(n-k)} x_5^{(k-1)(n-k)} 
\\\cdot Q_{k-1}(x_1 x_3 x_5^{(n-k)}, x_2 x_4 x_7^{(n-k)}, x_3,\ldots,x_7)
\cdot Q_{n-k}(x_1 x_6^{k}, x_2 x_7^{k}, x_3,\ldots,x_7).
\end{multline*}
The structure of this paper is as follows.
In Section 2, we introduce some background about permutations and two bijections between $\Sn{n}(132)$ and $\Sn{n}(123)$ and Dyck paths which are useful in our computation. Then we study length-three pattern distributions in $\Snn(132)$ in Section 3 and in $\Snn(123)$ in Section 4. In Section $5$, we show two applications of our results in computing pattern popularities.
Finally in Section $6$, we give a summary of this paper.

\section{Preliminaries}
Let $\sigma = \sg_1 \cdots \sg_n$ be a permutation written in one-line notation. 
The \emph{graph} of $\sg$, $G(\sg)$, is obtained by placing $\sg_i$ in the $i^{\mathrm{th}}$ column counting from left to right and $\sg_i^{\mathrm{th}}$ row counting from bottom to top on an $n\times n$ table for $i =1, \ldots, n$.
For example, the graph of the permutation 
$\sg = 471569283$ is pictured in \fref{graph}. 

We define $\inv(\sg): =\big|\{(i,j)|1\leq i<j\leq n,\ \sg_i>\sg_j\}\big|$ to be the number of \emph{inversions} and $\coinv(\sg): =\big|\{(i,j)|1\leq i<j\leq n,\ \sg_i<\sg_j\}\big|$ to be the number of \emph{coinversions} of a permutation $\sg$. Note that the number of inversions of a permutation is the same as the number of occurrences of the pattern $21$, and the number of coinversions of a permutation is the same as the number of occurrences of the pattern  $12$. Clearly, $\inv(\sg)+\coinv(\sg)=\binom{n}{2}$.
\begin{figure}[ht]
	\centering
	\vspace{-1mm}
	\begin{tikzpicture}[scale =.4]
	\draw[help lines] (0,0) grid (9,9);
	\filllll{1}{4};\filllll{2}{7};
	\filllll{3}{1};\filllll{4}{5};
	\filllll{5}{6};\filllll{6}{9};
	\filllll{7}{2};\filllll{8}{8};
	\filllll{9}{3};
	\end{tikzpicture}	
	\caption{The graph of $\sg=471569283$.}
	\label{fig:graph}
\end{figure}
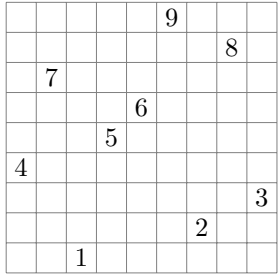

Given a permutation $\sg = \sg_1 \cdots \sg_n \in\Snn$, we say that $\sg_j$ is a \emph{left-to-right minimum} of $\sg$ if 
$\sg_i > \sg_j$ for all $i < j$. We let $\LRmin(\sg)$ denote the number of left-to-right minima of $\sg$. We shall also call each left-to-right minimum of $\sg$ a \emph{peak}, and the remaining numbers \emph{non-peaks} of $\sg$. We can see that the permutations in \fref{SnDn1} and \fref{SnDn2} both have peaks $\{8,6,4,3,2,1\}$.

Let $\pi=\pi_1\cdots\pi_m\in\Sn{m}$ and $\sg=\sg_1\cdots\sg_n\in\Snn$, then the \emph{direct sum} ($\pi\oplus\sg$) and the \emph{skew sum} ($\pi\ominus\sg$) of $\pi$ and $\sg$ are defined by 
\begin{eqnarray}
\pi\oplus\sg &: =& \pi_1\cdots\pi_m (\sg_1+m)\cdots(\sg_n+m),  \\
\pi\ominus\sg &: =&  (\pi_1+n)\cdots(\pi_m+n) \sg_1\cdots\sg_n.
\end{eqnarray}

Given an $n\times n$ square, we will label the coordinates of the columns from left to right
and the coordinates of the rows from top to bottom with $0,1, \ldots, n$ (different from the coordinates of the graph of a permutation).  An $(n,n)$-\emph{Dyck path} is a path made up of unit down-steps $D$ and unit right-steps $R$ which starts at $(0,0)$ and ends at $(n,n)$ and stays on or below the diagonal $y=x$ (these are ``down-right" Dyck paths). The set of $(n,n)$-Dyck paths is denoted by $\mathcal{D}_n$.

Given a Dyck path $P$, 
we let the first return of $P$, denoted by $\mathrm{ret}(P)$, be the smallest number $i>0$ such that $P$ goes through the point $(i,i)$.  For example, for 
$
P =DDRDDRRRDDRDRDRRDR
$
shown in  \fref{Dpath}, 
$\mathrm{ret}(P) =4$ since the leftmost point after $(0,0)$ on the diagonal that $P$ goes through is $(4,4)$. 
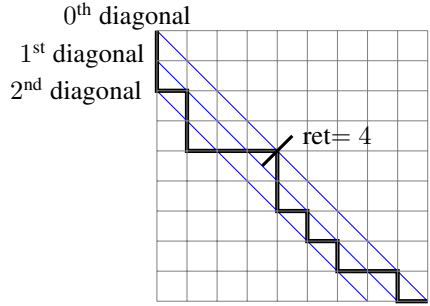
\begin{figure}[ht]
	\centering
	\vspace{-1mm}	
	\begin{tikzpicture}[scale =.4]
	\draw[blue] (0,9)--(9,0);
	\node[] at (-1,9.5) {$0^{\textnormal{th}}$ diagonal};
	\draw[blue] (0,8)--(8,0);
	\node[] at (-2.5,8.2) {$1^{\textnormal{st}}$ diagonal};
	\draw[blue] (0,7)--(7,0);
	\node[] at (-2.7,7) {$2^{\textnormal{nd}}$ diagonal};
	\path (-4,4.5);
	\draw[ultra thick] (0,9)--(0,7) -- (1,7)--(1,5)-- (4,5)--(4,3)--(5,3)--(5,2)--(6,2)--(6,1)--(8,1)--(8,0)--(9,0);
	\draw[help lines] (0,0) grid (9,9);	
	\draw[very thick] (3.5,4.5)--(4.5,5.5) node[right] {ret$=4$};
	\end{tikzpicture}
	\caption{A $(9,9)$-Dyck path $P=DDRDDRRRDDRDRDRRDR$.}
	\label{fig:Dpath}
	\vspace{-1mm}
\end{figure}

We refer to positions $(i,i)$ where $P$ goes through as \emph{return positions} of $P$. 
We call the full cells between $P$ and the main diagonal \emph{area cells}, and the cells below $P$ \emph{coarea cells}. Then we let $\area(P)$ and $\coarea(P)$ be the number of area cells and coarea cells of $P$. In the example in \fref{Dpath}, $\area(P)=7$ and $\coarea(P)=29$.

We shall also label the diagonals that go through corners of squares 
that are parallel to and below the main diagonal with $0, 1, 2, \ldots $ starting at the main diagonal, as shown in \fref{Dpath}. The \emph{peaks} of a path $P$ are the positions of consecutive $DR$ steps. 
We say that a peak is on the \thn{i} diagonal of $P$ if its $DR$ steps start and end at the \thn{i} diagonal. 
The path in \fref{Dpath} has four peaks on the first diagonal, one peak on the second diagonal and one peak on the main diagonal.

It is well known that for all $n \geq 1$, 
$|\Snn(132)|=|\Snn(123)|=|\mathcal{D}_n|=C_n$, where $C_n = \frac{1}{n+1}\binom{2n}{n}$ is the 
$n^{\mathrm{th}}$ Catalan number. Many bijections are known between these Catalan objects (see \cite{Stan}). We will apply the bijection of \cite{Kr} between $\Snn(132)$ and $\mathcal{D}_n$ and the bijection of \cite{EliDeu} between $\Snn(123)$ and $\mathcal{D}_n$ in our computation. The authors of this paper also discussed the two bijections in \cite{QR1,QR2} with more details.


We shall first describe 
the bijection $\Phi$ of \cite{Kr} between $\Snn(132)$ and $\mathcal{D}_n$. 
Given any permutation $\sg = \sg_1 \cdots \sg_n \in\Snn(132)$, we draw the graph $G(\sg)$ of $\sg$. Then, we 
shade the cells to the north-east of the cell that contains $\sg_i$. $\Phi(\sg)$ 
is the path that goes along the south-west boundary of the shaded cells. For example, this 
process is pictured in \fref{SnDn1} in the case where  $\sg=867943251\in\Sn{9}(132)$. 
In this case, $\Phi(\sg)=  DDRDDRRRDDRDRDRRDR$.  

\begin{figure}[ht]
	\centering
	\begin{tikzpicture}[scale =.35]
	\path[fill,black!15!white] (0,7) -- (1,7)--(1,5)-- (4,5)--(4,3)--(5,3)--(5,2)--(6,2)--(6,1)--(8,1)--(8,0)--(9,0)--(9,9)--(0,9);
	\draw[help lines] (0,0) grid (9,9);
	\filllll{1}{8};\filllll{2}{6};
	\filllll{3}{7};\filllll{4}{9};
	\filllll{5}{4};\filllll{6}{3};
	\filllll{7}{2};\filllll{8}{5};
	\filllll{9}{1};
	\path (0,-.5);
	\end{tikzpicture}	
	\begin{tikzpicture}[scale =.35]
	\draw[help lines] (0,0) grid (9,9);
	\fillgcross{1}{8};\fillgcross{2}{6};
	\fillcross{3}{7};\fillcross{4}{9};
	\fillgcross{5}{4};\fillgcross{6}{3};
	\fillgcross{7}{2};\fillcross{8}{5};
	\fillgcross{9}{1};	
	\draw (-1.5,4.5) node {$\rightarrow$};
	\path (-2.5,4.5);
	\path (0,-.5);
	\draw[very thick] (0,9)--(0,7) -- (1,7)--(1,5)-- (4,5)--(4,3)--(5,3)--(5,2)--(6,2)--(6,1)--(8,1)--(8,0)--(9,0);
	\path (10,0);
	\end{tikzpicture}
	\vspace{-1mm}
	\caption{The map $\Phi:\Snn(132)\rightarrow\mathcal{D}_n$.}
	\label{fig:SnDn1}
\end{figure}
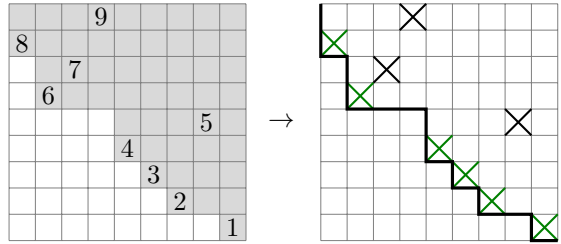

The \emph{horizontal segments} (or \emph{segments}) of the path $\Phi(\sg)$ are the maximal consecutive 
sequences of $R$ steps in $\Phi(\sg)$. For example, in \fref{SnDn1}, the lengths of 
the horizontal segments, reading from top to bottom, are $1,3,1,1,2,1$, and $\{6,7,9\}$ is the set of numbers associated with the second horizontal segment of $\Phi(\sg)$. 

The map $\Phi$ is invertible since for each Dyck path $P$, the peaks of $P$ give the left-to-right minima of the $132$-avoiding permutation, and the remaining numbers are uniquely determined by the left-to-right minima. More details about  $\Phi$ can be found in \cite{Kr}. We have the following properties for $\Phi$.

\ 
\begin{lem}\label{lem:p1}
	Let $P \in \mathcal{D}_n$ and $\sg = \Phi^{-1}(P)$. Then 
	\begin{enumerate}[(1)]
		\item for each horizontal segment $H$ of $P$, the set of numbers associated 
		to $H$ form a consecutive increasing sequence in $\sigma$ and the least 
		number of the sequence sits immediately above the first right-step of $H$. 
		
		\item The number $n$ is in the column of the last right-step before the first return. 
		
		\item Suppose that $\sigma_i$ is a peak of $\sigma$ and the cell containing $\sigma_i$ is on the 
		$k^{\mathrm{th}}$ diagonal. Then there are $k$ elements in the graph 
		$G(\sigma)$ in the first quadrant relative 
		to the coordinate system centered at $(i,\sigma_i)$. 
		
		\item  $\inv(\sg) = \coarea(P)$; $\coinv(\sg) = \area(P)$.
	\end{enumerate}
\end{lem}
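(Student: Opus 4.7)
The plan is to first set down the key dictionary between $P = \Phi(\sg)$ and the running minima $m_c := \min(\sg_1, \ldots, \sg_c)$: after its $c$-th right-step, the path sits at Dyck-height $H_c = n + 1 - m_c$. This is immediate from the construction of $\Phi$, since the graph cell $(c, r)$ lies in the shaded region iff $r \geq m_c$, so in column $c$ the south-west boundary runs along the line separating rows $m_c - 1$ and $m_c$. Armed with this dictionary, each of the four claims reduces to short bookkeeping combined with one use of $132$-avoidance.

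For part (1), I would observe that a horizontal segment of length $\ell$ whose first right-step lies in graph column $a+1$ corresponds to the equalities $m_{a+1} = m_{a+2} = \cdots = m_{a+\ell}$, with column $a+1$ being where a new running minimum is established; hence $\sg_{a+1} = m_{a+1}$ sits in the cell immediately above the first right-step. To finish I would show the remaining entries $\sg_{a+2}, \ldots, \sg_{a+\ell}$ are increasing by contradiction: any pair $\sg_{a+j} > \sg_{a+k}$ with $2 \leq j < k \leq \ell$ yields the $132$-occurrence $\sg_{a+1} < \sg_{a+k} < \sg_{a+j}$ at positions $a+1 < a+j < a+k$. Part (3) will follow similarly after a direct computation: if $\sg_i$ is a peak then $\sg_i = m_i$, and by the dictionary the corresponding $DR$-peak of $P$ lies on the $k$-th diagonal with $k = n + 1 - \sg_i - i$; on the other hand, the first-quadrant entries at $(i, \sg_i)$ are the $\sg_j$ with $j > i$ and $\sg_j > \sg_i$, and since $\sg_i$ being a left-to-right minimum forces $\sg_1, \ldots, \sg_{i-1}$ to all exceed $\sg_i$, exactly $(n - \sg_i) - (i - 1) = k$ such $j$ remain.

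For part (2) I would invoke the standard recursive structure of $132$-avoiders: writing $\sg_k = n$, $132$-avoidance forces $\{\sg_1, \ldots, \sg_{k-1}\} = \{n - k + 1, \ldots, n - 1\}$, for otherwise some $\sg_i$ with $i < k$ and some $\sg_j$ with $j > k$ would pair with $n$ to form a $132$-pattern. Hence $m_k = n - k + 1$ and $H_k = k$, so the path returns to the diagonal at column $k$. For $i < k$, any $i$ values chosen from the $(k-1)$-element set $\{n - k + 1, \ldots, n - 1\}$ have minimum at most $n - i$, so $H_i \geq i + 1 > i$, ruling out any earlier return. The last right-step before the first return therefore sits in graph column $k$, which is precisely where $n$ lies.

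Part (4) is where I expect the main work. The dictionary yields $\coarea(P) = \sum_c (m_c - 1)$ immediately, since in column $c$ the cells below the path are exactly those in rows $1, \ldots, m_c - 1$. I would then bijectively match inversions of $\sg$ with coarea cells by sending $(i, j)$ to the cell at graph column $i$, row $\sg_j$. The key verification---the main obstacle---is that this cell actually lies below the path, i.e.\ that $m_i > \sg_j$: this is precisely where $132$-avoidance enters, since if some $\ell \leq i$ had $\sg_\ell < \sg_j$ then the triple $\sg_\ell < \sg_j < \sg_i$ at positions $\ell < i < j$ would form a forbidden $132$. Injectivity is clear, and surjectivity holds because any coarea cell $(c, r)$ has $r = \sg_j$ for some $j > c$ with $\sg_c > r$, producing the inversion $(c, j)$. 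The identity $\coinv(\sg) = \binom{n}{2} - \inv(\sg) = \binom{n}{2} - \coarea(P) = \area(P)$ then follows from the total-cell count.
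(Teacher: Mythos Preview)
Your proof is correct. The paper itself defers (1)--(3) entirely to an external reference (Lemma~3 of \cite{QR1}), so for those parts you are supplying arguments the paper does not give; your running-minimum dictionary $H_c = n+1-m_c$ is exactly the right bookkeeping device, and your handling of each part is sound.

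For part (4), your approach coincides with the paper's: both establish the bijection $(i,j) \leftrightarrow (\text{column }i,\ \text{row }\sg_j)$ between inversions and coarea cells. The paper states this as the bare ``if and only if'' $\sg_i > \sg_j \Leftrightarrow$ cell $(i,\sg_j)$ is coarea, without isolating where $132$-avoidance is used; you make this explicit by noting that $\sg_j < m_i$ would otherwise fail and that a witness $\ell \le i$ with $\sg_\ell < \sg_j$ produces the forbidden pattern $\sg_\ell\,\sg_i\,\sg_j$. So your argument is the same in substance but more careful about the one nontrivial step.
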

\begin{proof}
	(1), (2) and (3) are proved in Lemma 3 in \cite{QR1}.
	
	For (4), it is clear that for any pair of indices  $i<j$, we have $\sg_i>\sg_j$ if and only if in path $P$, the \thn{i} column intersects the \thn{\sg_j} row at a coarea cell. Thus the number of inversions of $\sg$ is equal to the coarea of $P$, i.e.\ $\inv(\sg)=\coarea(P)$. Since $\inv(\sg)+\coinv(\sg)=\area(P)+\coarea(P)=\binom{n}{2}$, we have $\coinv(\sg)=\area(P)$.
\end{proof}

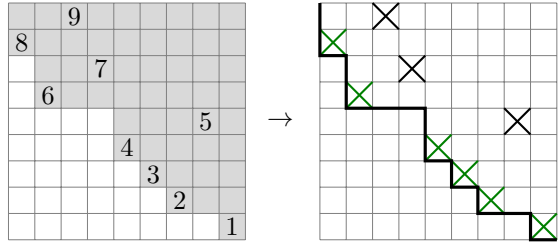
\begin{figure}[ht]
	\centering
	\begin{tikzpicture}[scale =.35]
	\path[fill,black!15!white] (0,7) -- (1,7)--(1,5)-- (4,5)--(4,3)--(5,3)--(5,2)--(6,2)--(6,1)--(8,1)--(8,0)--(9,0)--(9,9)--(0,9);
	\draw[help lines] (0,0) grid (9,9);
	\filllll{1}{8};\filllll{2}{6};
	\filllll{3}{9};\filllll{4}{7};
	\filllll{5}{4};\filllll{6}{3};
	\filllll{7}{2};\filllll{8}{5};
	\filllll{9}{1};
	\path (0,-.5);
	\end{tikzpicture}	
	\begin{tikzpicture}[scale =.35]
	\draw[help lines] (0,0) grid (9,9);
	\fillgcross{1}{8};\fillgcross{2}{6};
	\fillcross{3}{9};\fillcross{4}{7};
	\fillgcross{5}{4};\fillgcross{6}{3};
	\fillgcross{7}{2};\fillcross{8}{5};
	\fillgcross{9}{1};	
	\draw (-1.5,4.5) node {$\rightarrow$};
	\path (-2.5,4.5);
	\draw[very thick] (0,9)--(0,7) -- (1,7)--(1,5)-- (4,5)--(4,3)--(5,3)--(5,2)--(6,2)--(6,1)--(8,1)--(8,0)--(9,0);
	\path (0,-.5);
	\end{tikzpicture}
	\vspace{-1mm}
	\caption{The map $\Psi:\Snn(123)\rightarrow\mathcal{D}_n$.}
	\label{fig:SnDn2}
\end{figure}

The bijection $\Psi:\Snn(123) \rightarrow \mathcal{D}_n$  given 
by \cite{EliDeu} can be described in a similar way. 
Given any permutation $\sg \in\Snn(123)$, the Dyck path $\Psi(\sg)$ is constructed exactly as the bijection $\Phi$. \fref{SnDn2} shows an example of this map from $\sg=869743251\in\Sn{9}(123)$ to the Dyck path {\em DDRDDRRRDDRDRDRRDR}. The map $\Psi$ is invertible because each $123$-avoiding permutation has a unique left-to-right minima set. More details about  $\Psi$ can be found in \cite{EliDeu}. We then have the following lemma from \cite{QR1}.

\ 
\begin{lem}[\cite{QR1}, Lemma 4]\label{lem:p2} Let $P \in \mathcal{D}_n$ and $\sg = \Psi^{-1}(P)$. Then 
	\begin{enumerate}[(1)]
		\item for each horizontal segment $H$ of $P$, the least element of the 
		set of numbers associated to $H$ sits directly above the first right-step of $H$, and 
		the remaining numbers of the set form a consecutive decreasing sequence in $\sigma$. 
		
		\item $\sigma$ can be decomposed into two decreasing subsequences, the first decreasing 
		subsequence corresponds to the peaks of $\sigma$ and the second decreasing subsequence 
		corresponds to the non-peaks of $\sigma$. 
		
		\item Suppose that $\sigma_i$ is a peak of $\sigma$ and the cell containing $\sigma_i$ is on the 
		$k^{\mathrm{th}}$ diagonal. Then there are $k$ elements in the graph 
		$G(\sigma)$ in the first quadrant relative 
		to the coordinate system centered at $(i,\sigma_i)$.
	\end{enumerate}
\end{lem}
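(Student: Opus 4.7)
The plan is to prove (2) first, since it supplies the structural decomposition that (1) leans on. The peaks form a decreasing sequence straight from the definition of left-to-right minimum. For the non-peaks I would argue by contradiction: if two non-peaks $\sigma_a, \sigma_b$ with $a<b$ satisfied $\sigma_a<\sigma_b$, then $\sigma_a$ being a non-peak yields some $\sigma_c$ with $c<a$ and $\sigma_c<\sigma_a$, and the triple $\sigma_c\sigma_a\sigma_b$ would be an occurrence of $123$, contradicting $\sigma\in\Sn{n}(123)$.

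For (1), I would unpack the construction of $\Psi$ in terms of the running minimum $m(i):=\min(\sigma_1,\ldots,\sigma_i)$. The shaded cells in column $i$ are precisely those weakly northeast of some $\sigma_{i'}$ with $i'\le i$, so the southwest boundary of the shaded region ends column $i$ at the height determined by $m(i)$. A horizontal segment $H$ therefore corresponds to a maximal run of columns $j, j+1,\ldots, j+\ell-1$ on which $m$ is constant, which is exactly the condition that $\sigma_j$ is a new left-to-right minimum while $\sigma_{j+1},\ldots,\sigma_{j+\ell-1}$ are non-peaks. Consequently $\sigma_j$ is strictly smaller than each of the remaining associated values and sits directly above the first right-step of $H$; and since $\sigma_{j+1},\ldots,\sigma_{j+\ell-1}$ are all non-peaks, part (2) makes them a consecutive decreasing subsequence.

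For (3), I would read off the cell containing $\sigma_i$ from its coordinates: in the Dyck path's coordinate system it is the cell in column $i$ and row $n-\sigma_i+1$, which lies on the $k$-th diagonal exactly when $k=(n-\sigma_i+1)-i$. On the other hand, the number of entries of $G(\sigma)$ in the first quadrant relative to $(i,\sigma_i)$ equals $|\{j>i:\sigma_j>\sigma_i\}|$. Because $\sigma_i$ is a left-to-right minimum, all $\sigma_i-1$ values smaller than $\sigma_i$ must appear at positions after $i$; hence of the $n-i$ positions after $i$, exactly $n-i-(\sigma_i-1)=k$ carry values greater than $\sigma_i$.

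The only delicate point throughout is the coordinate bookkeeping---translating cleanly between the graph's bottom-up row labeling and the Dyck path's top-down row labeling so that the diagonal statement in (3) is unambiguous. Once that is pinned down, each part is immediate from $123$-avoidance and basic facts about left-to-right minima, and the proof closely parallels that of \lref{p1} for the $132$-avoiding case.
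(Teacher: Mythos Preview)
Your argument is correct. Each of the three parts is handled cleanly: part~(2) is the standard $123$-avoidance observation, part~(1) follows once you identify a horizontal segment with a maximal block on which the running minimum $m(i)$ is constant (so the first column is a new left-to-right minimum and the rest are non-peaks, hence decreasing by~(2)), and the coordinate computation in part~(3) is right---the cell containing a peak $\sigma_i$ sits on the diagonal $k=n-\sigma_i-i+1$, while the left-to-right-minimum property forces all $\sigma_i-1$ smaller values to the right of position $i$, leaving exactly $(n-i)-(\sigma_i-1)=k$ larger values there.

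There is nothing to compare against in this paper: \lref{p2} is stated here without proof and attributed to \cite{QR1}, so the paper itself gives no argument. Your self-contained proof is therefore a genuine addition rather than a paraphrase, and the approach you take (prove~(2) first, then feed it into~(1), and handle~(3) by a direct count) is the natural one and matches in spirit the proof the paper \emph{does} give for the analogous \lref{p1} in the $132$-avoiding setting.
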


\section{The functions $Q_{132}^{\gamma}(t,x)$}

In this section, we give the recursive relations for generating functions $Q_{132}^{\gamma}(t,x)$ by examining the set $\Snn(132)$. We shall first look at the structure of a $132$-avoiding permutation.

Given $\sg=\sg_1\cdots\sg_n\in\Snn(132)$, we suppose $\sg_k=n$ is the biggest number in the permutation. The numbers $\sg_1,\ldots,\sg_{k-1}$ must be bigger than the numbers $\sg_{k+1},\ldots,\sg_n$ since otherwise there will be a  $132$ pattern in $\sg$. Thus we can break the permutation $\sg$ into three parts: the first $k-1$ numbers, the biggest number $\sg_k=n$, and the last $n-k$ numbers. 
We let $A(\sg) =\red(\sg_1\cdots\sg_{k-1})$ and $B(\sg) =\red(\sg_{k+1}\cdots\sg_n)$ be the reduction of the first $k-1$ numbers and the last $n-k$ numbers, then $A(\sg)\in\Sn{k-1}(132)$ and $B(\sg)\in\Sn{n-k}(132)$. The left picture of \fref{SnDn1} is an example for $\sg=867943251\in\Sn{9}(132)$ with $A(\sg)=312$ and $B(\sg)=43251$. We also let $\Abar(\sg): =\red(\sg_1\cdots\sg_{k})$ be the reduction of the first $k$ numbers. The structure of $\sg$ is shown in \fref{Sn132structure}.

\begin{figure}[ht]
	\centering
	\begin{tikzpicture}[scale =.5]
	\draw[very thick] (0,4) rectangle (3,7);
	\draw[very thick] (4,0) rectangle (8,4);
	\node at (1.5,5.5) {$A(\sigma)$};
	\node at (6,2) {$B(\sigma)$};
	\fillll{4}{8}{n};
	\end{tikzpicture}
	\vspace{-3mm}
	\caption{The structure of $\sg\in\Snn(132)$.}
	\label{fig:Sn132structure}
\end{figure}

Now we count the number of occurrences of a pattern $\ga=\ga_1\cdots\ga_r\in\Sn{r}(132)$ in $\sg\in\Snn(132)$. The case when $r=1$ is trivial and $\occr_1(\sg)=n$. If $r>1$, we shall count $\ga$ from the three parts, $A(\sg)$, $\sg_k$ and $B(\sg)$. First, there are $\left(\occr_{\ga}(A(\sg))+\occr_{\ga}(B(\sg))\right)$ occurrences of $\ga$ in parts $A(\sg)$ and $B(\sg)$. Then we count occurrences of $\ga$ that intersect with at least two of the three parts of $\sg$.

Similar to $\sg$, we shall break $\ga$ into three parts: $A(\ga)=\red(\ga_1\cdots\ga_{s-1})$, $\ga_s=r$ and $B(\ga) =\red(\ga_{s+1}\cdots\ga_r)$. We also let $\Abar(\ga)=\red(\ga_1\cdots\ga_{s})$. Let $\chi(x)$ be the function that takes value $1$ when the statement $x$ is true and $0$ otherwise. Then there are 
\begin{enumerate}[(a)]
	\item $\chi(s=r) \cdot \occr_{A(\ga)}(A(\sg))$ occurrences of $\ga$ stretching over parts $A(\sg)$ and $\sg_k$, 
	\item $\chi(s=1) \cdot \occr_{B(\ga)}(B(\sg))$ occurrences of $\ga$ stretching over parts  $\sg_k$ and $B(\sg)$, 
	\item $\chi(s<r) \cdot \occr_{\Abar(\ga)}(A(\sg)) \cdot \occr_{B(\ga)}(B(\sg))$ occurrences of $\ga$ stretching over parts  $A(\sg)$ and $B(\sg)$ if $\ga_r=r-s$, 
	\item and $\chi(1<s<r) \cdot \occr_{A(\ga)}(A(\sg)) \cdot \occr_{B(\ga)}(B(\sg))$ occurrences of $\ga$ stretching over all three parts.
\end{enumerate}
Note that (c) requires $\ga_r=r-s$, i.e.\ the permutation $B(\ga)$ cannot be expressed as the skew sum of two smaller permutations. If $\ga_r\neq r-s$, then
\begin{enumerate}[(a')]
	\addtocounter{enumi}{2}
	\item if $\{ \pi_1\ominus\tau_1,\ldots, \pi_j \ominus\tau_j \}$ is the collection of ways to write $\ga$ as the skew sum of two smaller permutations, then the number of occurrences of $\ga$ stretching over parts  $A(\sg)$ and $B(\sg)$ is
	\begin{equation*}
	\sum_{i=1}^{j} \occr_{\pi_i}(A(\sg)) \cdot \occr_{\tau_i}(B(\sg)).
	\end{equation*}
\end{enumerate}
We name the method above the \emph{recursive counting method}, which allows us to count $\occr_\ga(\sg)$ by counting pattern occurrences from the components of $\sg$. 

\subsection{The function $Q_{n,132}^{\{12,21\}}(x_1,x_2)$}
As a first application of our recursive counting method, we have the following theorem first proved by F\"{u}rlinger and Hofbauer \cite{FH} in 1985 about the distribution of patterns of length two.

\ 
\begin{theorem}[F\"{u}rlinger and Hofbauer]\label{theorem:Q12}
	Let $Q_n(x_1,x_2): =Q_{n,132}^{\{12,21\}}(x_1,x_2)$ and \\ $Q(t,x_1,x_2): =Q_{132}^{\{12,21\}}(t,x_1,x_2)$, then
	\begin{equation}\label{Q12}
	Q_0(x_1,x_2)=1, \ Q_n(x_1,x_2)=\sum_{k=1}^{n}x_1^{k-1}x_2^{k(n-k)}Q_{k-1}(x_1,x_2)Q_{n-k}(x_1,x_2),
	\end{equation} 
	and
	\begin{equation}\label{Q12function}
	Q(t,x,1)=1+t Q(t,x,1)\cdot Q(tx,x,1).
	\end{equation}
\end{theorem}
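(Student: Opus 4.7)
The plan is to apply the recursive counting method developed at the start of Section~3 to the pair of patterns $\ga_1=12$ and $\ga_2=21$. Given $\sg\in\Snn(132)$ with $\sg_k=n$, the canonical decomposition $\sg\mapsto(A(\sg),n,B(\sg))$ places every value of $A(\sg)$ strictly above every value of $B(\sg)$, and every position of $A(\sg)$ strictly to the left of $n$, which in turn sits strictly to the left of $B(\sg)$. My first step is to count the $12$- and $21$-pairs in $\sg$ that are not entirely inside $A(\sg)$ or entirely inside $B(\sg)$.

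Because $n$ is the maximum entry and $A$-values dominate $B$-values, the only cross-block $12$-pairs come from an entry of $A(\sg)$ paired with the entry $n$, contributing exactly $k-1$. The cross-block $21$-pairs split into those formed by $n$ with an entry of $B(\sg)$ ($n-k$ pairs) and those formed by an entry of $A(\sg)$ with an entry of $B(\sg)$ ($(k-1)(n-k)$ pairs), for a total of $k(n-k)$. Consequently the weight $x_1^{\occr_{12}(\sg)}x_2^{\occr_{21}(\sg)}$ factors as $x_1^{k-1}x_2^{k(n-k)}$ times the corresponding weights of $A(\sg)$ and $B(\sg)$. Summing over $\sg\in\Snn(132)$ by first conditioning on the position $k$ of the maximum, and noting that $A(\sg)$ and $B(\sg)$ range independently over $\Sn{k-1}(132)$ and $\Sn{n-k}(132)$, produces the recursion \eqref{Q12}.

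For the functional equation, I would specialize $x_2=1$ in \eqref{Q12} to obtain $Q_n(x,1)=\sum_{k=1}^{n}x^{k-1}Q_{k-1}(x,1)Q_{n-k}(x,1)$, multiply by $t^n$, and sum over $n\geq 1$. Reindexing with $j=k-1$ and $m=n-k$ splits the factor $t^{j+m+1}x^j$ as $t\cdot(tx)^j\cdot t^m$, so the double sum factors as $t\cdot Q(tx,x,1)\cdot Q(t,x,1)$; adding the constant term $1$ (from $n=0$) and rearranging then yields \eqref{Q12function}.

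The argument has no real obstacle; the only step requiring care is the bookkeeping of the cross-block $21$-pairs, where one must remember that $n$ itself contributes $n-k$ new inversions with $B(\sg)$ in addition to the $(k-1)(n-k)$ inversions between $A(\sg)$ and $B(\sg)$, so that the exponent of $x_2$ is $k(n-k)$ rather than $(k-1)(n-k)$.
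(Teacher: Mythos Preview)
Your proposal is correct and follows essentially the same approach as the paper: both use the canonical decomposition of $\sg\in\Snn(132)$ at the position of $n$ into $A(\sg)$, $n$, $B(\sg)$, compute the cross-block contributions $\occr_{12}=k-1$ and $\occr_{21}=k(n-k)$ exactly as you describe, and then sum over $k$ to obtain \eqref{Q12}; the derivation of \eqref{Q12function} by specializing $x_2=1$ and reindexing the double sum is likewise identical.
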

The theorem was initially proved by enumerating area and coarea statistics of Dyck paths. 
Here we shall give a brief proof using permutations.

\begin{proof}
	To prove equation (\ref{Q12}), we shall consider the distribution of the patterns $\ga=12$ and $\tau=21$ in $\Snn(132)$ using the recursive counting method.
	
	Given $\sg\in\Snn(132)$ such that $\sg_k=n$, we break $\sg$ into 3 parts: $A(\sg),\ \sg_k$ and $B(\sg)$. We have $A(\sg)\in\Sn{k-1}(132)$ and $B(\sg)\in\Sn{n-k}(132)$. By (a) of the recursive counting method, 
	\begin{eqnarray}\label{12}
	\occr_{12}(\sg)&=&\occr_{12}(A(\sg))+\occr_{12}(B(\sg))+\occr_1(A(\sg))\nonumber\\
	&=&\occr_{12}(A(\sg))+\occr_{12}(B(\sg))+k-1,
	\end{eqnarray}
	and
	\begin{eqnarray}\label{21}
	\occr_{21}(\sg)&=&\occr_{21}(A(\sg))+\occr_{21}(B(\sg))+\occr_{1}(B(\sg))+\occr_{1}(A(\sg)) \cdot \occr_{1}(B(\sg))\nonumber\\
	&=&\occr_{21}(A(\sg))+\occr_{21}(B(\sg))+k(n-k).
	\end{eqnarray}

	Thus,
	\begin{eqnarray*}
		&&Q_n(x_1,x_2) =  \sum_{\sg\in\Sn{n}(132)} x_1^{\occr_{12}(\sg)} x_2^{\occr_{21}(\sg)}  \nonumber\\
		&=& \sum_{k=1}^{n} \sum_{\sg\in\Sn{n}(132),\sg_k=n} x_1^{\occr_{12}(A(\sg))+\occr_{12}(B(\sg))+k-1} x_2^{\occr_{21}(A(\sg))+\occr_{21}(B(\sg))+k(n-k)} \nonumber\\
		&=& \sum_{k=1}^{n} x_1^{k-1}x_2^{k(n-k)} \sum_{\sg\in\Sn{n}(132),\sg_k=n} x_1^{\occr_{12}(A(\sg))} x_2^{\occr_{21}(A(\sg))} x_1^{\occr_{12}(B(\sg))} x_2^{\occr_{21}(B(\sg))}  \nonumber\\
		&=& \sum_{k=1}^{n} x_1^{k-1}x_2^{k(n-k)} \sum_{\pi\in\Sn{k-1}(132)} x_1^{\occr_{12}(\pi)} x_2^{\occr_{21}(\pi)} \sum_{\tau\in\Sn{n-k}(132)} x_1^{\occr_{12}(\tau)} x_2^{\occr_{21}(\tau)}  \nonumber\\
		&=& \sum_{k=1}^{n} x_1^{k-1}x_2^{k(n-k)} Q_{k-1}(x_1,x_2)Q_{n-k}(x_1,x_2),
	\end{eqnarray*}
	which proves equation (\ref{Q12}). Equation (\ref{Q12function}) is a consequence of equation (\ref{Q12}). More explicitly, by equation (\ref{Q12}),
	\begin{equation}
	Q_n(x,1)=\sum_{k=1}^{n}x^{k-1}Q_{k-1}(x,1)Q_{n-k}(x,1)
	\end{equation}
	for any $n>0$. Thus,
	\begin{eqnarray*}
		&&Q(t,x,1)=\sum_{n\geq 0}t^n Q_n(x,1)\\
		&=&\sum_{n\geq 0}t^n \sum_{k=1}^{n}x^{k-1}Q_{k-1}(x,1)Q_{n-k}(x,1)\\
		&=&1+ t \sum_{n\geq 0} \sum_{k=1}^{n} t^{{k-1}}x^{k-1}Q_{k-1}(x,1) t^{n-k} Q_{n-k}(x,1)\\
		&=&1+ t \left(\sum_{n\geq 0}(tx)^n Q_n(x,1)\right) \left(\sum_{n\geq 0}t^n Q_n(x,1)\right)\\
		&=&1+t Q(tx,x,1)\cdot Q(t,x,1).\qedhere
	\end{eqnarray*}
\end{proof}

Using the recursive equation (\ref{Q12}), we can use Mathematica to compute the Taylor series of the function $Q_{132}^{\{12,21\}}(t,x_1,x_2)$ as follows.
\begin{multline}\label{1221}
Q_{132}^{\{12,21\}}(t,x_1,x_2)=1 + t + t^2 ( x_1 +x_2) + t^3 ( x_1^3+ x_1^2 x_2+2 x_1 x_2^2 +x_2^3) + 
t^4 (x_1^6+x_1^5 x_2\\+2 x_1^4x_2^2+3 x_1^3 x_2^3+ 3 x_1^2 x_2^4+ 3 x_1 x_2^5+x_2^6) + 
t^5 ( x_1^{10} +x_1^9 x_2+2 x_1^8 x_2^2\\+ 3 x_1^7 x_2^3+5 x_1^6 x_2^4+5 x_1^5 x_2^5+7 x_1^4 x_2^6+ 7 x_1^3 x_2^7+6 x_1^2 x_2^8+4 x_1x_2^9+x_2^{10}) + \cdots.
\end{multline}
Setting $x_2=1$ and $x_1=x$ in equation (\ref{1221}), we have
\begin{multline}
Q_{132}^{12}(t,x)=1 + t + t^2 (1 + x) + t^3 (1 + 2 x + x^2 + x^3) + 
t^4 (1 + 3 x + 3 x^2 + 3 x^3 + 2 x^4 + x^5 + x^6) \\+ 
t^5 (1 + 4 x + 6 x^2 + 7 x^3 + 7 x^4 + 5 x^5 + 5 x^6 + 3 x^7 + 
2 x^8 + x^9 + x^{10}) + \cdots.
\end{multline}

\subsection{The function $Q_{n,132}^{\{12,21,123,213,231,312,321\}}(x_1,\ldots,x_7)$}
Let $\Ga_2=\{12,21\}$ and $\Ga_3=\{123,213,231,312,321\}$ be sets of permutation patterns, then the function
\begin{equation*}
Q_{n,132}^{\Ga_2\cup\Ga_3}(x_1,\ldots,x_7)=Q_{n,132}^{\{12,21,123,213,231,312,321\}}(x_1,\ldots,x_7)
\end{equation*}
 tracks all patterns of length two or three of permutations in $\Snn(132)$. 
We shall prove the following theorem. We use the shorthand $Q_n(x_1,\ldots,x_7)$ for $Q_{n,132}^{\Ga_2\cup\Ga_3}(x_1,\ldots,x_7)$. 

\ 
\begin{theorem}\label{theorem:S3Sn132}The function $Q_{n,132}^{\Ga_2\cup\Ga_3}(x_1,\ldots,x_7)$ satisfies the recursion
	
	\hspace*{3mm}$\displaystyle Q_0(x_1,\ldots,x_7) =1,$
	\begin{multline}\label{eqnS3}
	Q_{n}(x_1,\ldots,x_7)=\sum_{k=1}^{n} x_1^{k-1} x_2^{k(n-k)} x_5^{(k-1)(n-k)} 
	\\\cdot Q_{k-1}(x_1 x_3 x_5^{(n-k)}, x_2 x_4 x_7^{(n-k)}, x_3,\ldots,x_7)
	\cdot Q_{n-k}(x_1 x_6^{k}, x_2 x_7^{k}, x_3,\ldots,x_7).
	\end{multline}
\end{theorem}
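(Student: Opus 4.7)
The plan is to apply the recursive counting method set up in the discussion preceding Theorem~\ref{theorem:Q12}. Fix $\sg\in\Snn(132)$ and let $k$ be the position of $n$, so $\sg=\alpha\,n\,\beta$ where $\alpha=A(\sg)\in\Sn{k-1}(132)$, $\beta=B(\sg)\in\Sn{n-k}(132)$, and every value in $\alpha$ exceeds every value in $\beta$. The exponents of $x_1$ and $x_2$ are already handled in \tref{Q12}, contributing $k-1$ new $12$-patterns (using $n$ as the larger element) and $k(n-k)$ new $21$-patterns (from cross pairs $(\alpha,\beta)$ and $(n,\beta)$).

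For each pattern $\gamma\in\Gamma_3$, I would enumerate where the three indices $i_1<i_2<i_3$ of an occurrence can lie relative to the decomposition, giving seven position types which I denote $AAA,AAN,AAB,ANB,ABB,NBB,BBB$. Since $n$ is the maximum and every $A$-value exceeds every $B$-value, most types force a specific relative order:
\begin{itemize}
\item $AAN$ forces a $\mathord{?}\mathord{?}3$ shape, so only the pattern whose first two entries match give a contribution; this costs an occurrence of a length-two pattern in $\alpha$;
\item $ANB$ always produces the pattern $231$;
\item $AAB$ forces a $\mathord{?}\mathord{?}1$ shape (smallest at the end); this contributes to $231$ or $321$;
\item $ABB$ and $NBB$ force a $3\mathord{?}\mathord{?}$ shape; these contribute to $312$ or $321$.
\end{itemize}
Working through the seven cases for each of $123,213,231,312,321$ yields explicit formulas such as $\occr_{231}(\sg)=\occr_{231}(\alpha)+\occr_{231}(\beta)+(n-k)\occr_{12}(\alpha)+(k-1)(n-k)$ and $\occr_{321}(\sg)=\occr_{321}(\alpha)+\occr_{321}(\beta)+(n-k)\occr_{21}(\alpha)+k\,\occr_{21}(\beta)$, together with analogous formulas for $123,213,312$.

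The final step is algebraic: substitute these formulas into the monomial $\prod_{i=1}^{7}x_i^{\occr_{\gamma_i}(\sg)}$ and regroup factors by whether they depend on $\alpha$, on $\beta$, or on neither. The $k$-dependent prefactor $x_1^{k-1}x_2^{k(n-k)}x_5^{(k-1)(n-k)}$ comes from the three additive constants above. Every $\alpha$-statistic $\occr_{12}(\alpha)$ gets multiplied by $x_3 x_5^{n-k}$ beyond the $x_1$ it already sees, and each $\occr_{21}(\alpha)$ picks up $x_4 x_7^{n-k}$; symmetrically each $\occr_{12}(\beta)$ picks up $x_6^k$ and each $\occr_{21}(\beta)$ picks up $x_7^k$. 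Summing over $\alpha$ and $\beta$ with the modified variables then reproduces $Q_{k-1}(x_1x_3x_5^{n-k},x_2x_4x_7^{n-k},x_3,\ldots,x_7)\cdot Q_{n-k}(x_1x_6^k,x_2x_7^k,x_3,\ldots,x_7)$, and summing over $k=1,\ldots,n$ gives the recursion in \eqref{eqnS3}.

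The main obstacle is purely bookkeeping: correctly classifying all seven position types for each of the five length-three patterns and making sure no cross occurrence is double-counted or missed. In particular, the case $AAB$ for the pattern $231$ (requiring $\alpha_{i_1}<\alpha_{i_2}$ for every choice of $B$-element) and the symmetric $ABB$/$NBB$ cases for $312$ and $321$ must be tracked carefully, since these are what force the mixed monomials $x_5^{n-k}$ and $x_6^k,x_7^k$ to appear inside the $Q_{k-1}$ and $Q_{n-k}$ arguments.
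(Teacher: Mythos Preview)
Your proposal is correct and follows essentially the same approach as the paper: decompose $\sg$ at the position of $n$, derive the five occurrence formulas for $123,213,231,312,321$ via the recursive counting method (your position-type classification $AAA,\ldots,BBB$ is exactly the case analysis (a)--(d) in the paper, and your formulas for $\occr_{231}$ and $\occr_{321}$ match equations~(\ref{o231}) and~(\ref{o321})), then substitute into the monomial $\prod x_i^{\occr_{\gamma_i}(\sg)}$ and factor into the two $Q$'s with shifted arguments. The only difference is presentational---you spell out the seven position types explicitly while the paper packages them into its pre-stated recursive counting rules.
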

\begin{proof}
	We shall consider the distribution of the patterns $\ga_1=12$, $\ga_2= 21$, $\ga_3= 123$, $\ga_4= 213$, $\ga_5= 231$, $\ga_6= 312$ and $\ga_7= 321$ in $\Snn(132)$.
	
	Given $\sg\in\Snn(132)$ such that $\sg_k=n$. Similar to \tref{Q12}, we make permutations $A(\sg)\in\Sn{k-1}(132)$ and $B(\sg)\in\Sn{n-k}(132)$.
	The number of occurrences of $\ga_1$ and $\ga_2$ is given by equation (\ref{12}) and (\ref{21}).
	For the patterns $\ga_3,\ \ga_4,\ \ga_5,\ \ga_6$ and $\ga_7$, we have the following formulas from the recursive counting method.
	\begin{eqnarray}\label{o123}
	\occr_{123}(\sg)&=&\occr_{123}(A(\sg))+\occr_{123}(B(\sg))+\occr_{12}(A(\sg)),
	\\\label{o213}
	\occr_{213}(\sg)&=&\occr_{213}(A(\sg))+\occr_{213}(B(\sg))+\occr_{21}(A(\sg)),
	\\\label{o231}
	\occr_{231}(\sg)&=&\occr_{231}(A(\sg))+\occr_{231}(B(\sg))
	\nonumber\\&&+\occr_{12}(A(\sg))\cdot\occr_1(B(\sg))+\occr_{1}(A(\sg))\cdot\occr_1(B(\sg))\nonumber\\
	&=&\occr_{231}(A(\sg))+\occr_{231}(B(\sg))+(n-k)\occr_{12}(A(\sg))+(k-1)(n-k),
	\\\label{o312}
	\occr_{312}(\sg)&=&\occr_{312}(A(\sg))+\occr_{312}(B(\sg))+\occr_{12}(B(\sg))+\occr_{1}(A(\sg))\cdot\occr_{12}(B(\sg))\nonumber\\
	&=&\occr_{312}(A(\sg))+\occr_{312}(B(\sg))+k\cdot\occr_{12}(B(\sg)),\mbox{ \ \ \  and}
	\\\label{o321}
	\occr_{321}(\sg)&=&\occr_{321}(A(\sg))+\occr_{321}(B(\sg))+\occr_{21}(B(\sg))
	\nonumber\\&&+\occr_{1}(A(\sg))\cdot\occr_{21}(B(\sg))+\occr_{21}(A(\sg))\cdot\occr_{1}(B(\sg))\nonumber\\
	&=&\occr_{321}(A(\sg))+\occr_{321}(B(\sg))+k\cdot\occr_{21}(B(\sg))+(n-k)\occr_{21}(A(\sg)).
	\end{eqnarray}
	Thus,
	\begin{eqnarray*}\label{S3arrange}
		&&Q_n(x_1,\ldots,x_7) \nonumber\\
		&=&  \sum_{\sg\in\Sn{n}(132)} x_1^{\occr_{12}(\sg)} x_2^{\occr_{21}(\sg)} x_3^{\occr_{123}(\sg)} x_4^{\occr_{213}(\sg)} x_5^{\occr_{231}(\sg)} x_6^{\occr_{312}(\sg)} x_7^{\occr_{321}(\sg)}  \nonumber\\
		&=& \sum_{k=1}^{n} \sum_{\pi\in\Sn{k-1}(132)} \sum_{\tau\in\Sn{n-k}(132)} 
		x_1^{\occr_{12}(\pi)+\occr_{12}(\tau)+k-1}
		x_2^{\occr_{21}(\pi)+\occr_{21}(\tau)+k(n-k)}\nonumber\\
		&&\cdot
		x_3^{\occr_{123}(\pi)+\occr_{123}(\tau)+\occr_{12}(\pi)}
		x_4^{\occr_{213}(\pi)+\occr_{213}(\tau)+\occr_{21}(\pi)}\nonumber\\
		&&\cdot
		x_5^{\occr_{231}(\pi)+\occr_{231}(\tau)+(n-k)\occr_{12}(\pi)+(k-1)(n-k)}
		x_6^{\occr_{312}(\pi)+\occr_{312}(\tau)+k\cdot\occr_{12}(\tau),}\nonumber\\
		&&\cdot
		x_7^{\occr_{321}(\pi)+\occr_{321}(\tau)+k\cdot\occr_{21}(\tau)+(n-k)\occr_{21}(\pi)}
		\nonumber\\
		&=&\sum_{k=1}^{n} x_1^{k-1} x_2^{k(n-k)} x_5^{(k-1)(n-k)} 
		Q_{k-1}(x_1 x_3 x_5^{(n-k)}, x_2 x_4 x_7^{(n-k)}, x_3,\ldots,x_7)
		\nonumber\\
		&&\cdot Q_{n-k}(x_1 x_6^{k}, x_2 x_7^{k}, x_3,\ldots,x_7).
		\qedhere
	\end{eqnarray*}
\end{proof}

Using Mathematica, one can efficiently compute the polynomial\\ $Q_{132}^{\Ga_2\cup\Ga_3}(t,x_1,\ldots,x_7)=\sum_{n\geq 0}t^n Q_{n,132}^{\Ga_2\cup\Ga_3}(x_1,\ldots,x_7)$ as follows. 
\begin{multline}
Q_{132}^{\Ga_2\cup\Ga_3}(t,x_1,\ldots,x_7) = 1+t+t^2(x_1+x_2) + t^3(x_1^3 x_3+x_1^2 x_2 x_4+x_1 x_2^2 x_5+x_1 x_2^2 x_6+x_2^3 x_7)\\
+t^4\left(x_1^6 x_3^4+x_1^5 x_2 x_3^2 x_4^2+x_1^4 x_2^2 x_3 x_4^2 x_5+x_1^3 x_2^3 x_3 x_5^3+x_1^4 x_2^2 x_3 x_4^2 x_6+x_1^2 x_2^4 x_5^2 x_6^2+x_1^3 x_2^3 x_3 x_6^3\right.
\\\left.+x_1^3 x_2^3 x_4^3 x_7+x_1^2 x_2^4 x_4 x_5^2 x_7+x_1^2 x_2^4 x_4 x_6^2 x_7+x_1 x_2^5 x_5^2 x_7^2+x_1 x_2^5 x_5 x_6 x_7^2+x_1 x_2^5 x_6^2 x_7^2+x_2^6 x_7^4\right)\\
+t^5\left(x_1^{10} x_3^{10}+x_1^9 x_2 x_3^7 x_4^3+x_1^8 x_2^2 x_3^5 x_4^4 x_5+x_1^7 x_2^3 x_3^4 x_4^3 x_5^3+x_1^6 x_2^4 x_3^4 x_5^6+x_1^8 x_2^2 x_3^5 x_4^4 x_6+x_1^6 x_2^4 x_3^2 x_4^4 x_5^2 x_6^2\right.
\\+x_1^7 x_2^3 x_3^4 x_4^3 x_6^3+x_1^4 x_2^6 x_3 x_5^6 x_6^3+x_1^6 x_2^4 x_3^4 x_6^6+x_1^4 x_2^6 x_3 x_5^3 x_6^6+x_1^7 x_2^3 x_3^3 x_4^6 x_7+x_1^6 x_2^4 x_3^2 x_4^5 x_5^2 x_7
\\+x_1^5 x_2^5 x_3^2 x_4^2 x_5^5 x_7+x_1^6 x_2^4 x_3^2 x_4^5 x_6^2 x_7+x_1^5 x_2^5 x_3^2 x_4^2 x_6^5 x_7+x_1^5 x_2^5 x_3 x_4^5 x_5^2 x_7^2+x_1^4 x_2^6 x_3 x_4^2 x_5^5 x_7^2
\\+x_1^5 x_2^5 x_3 x_4^5 x_5 x_6 x_7^2+x_1^4 x_2^6 x_3 x_4^2 x_5^4 x_6 x_7^2+x_1^5 x_2^5 x_3 x_4^5 x_6^2 x_7^2+x_1^3 x_2^7 x_4 x_5^4 x_6^3 x_7^2+x_1^4 x_2^6 x_3 x_4^2 x_5 x_6^4 x_7^2
\\+x_1^3 x_2^7 x_4 x_5^3 x_6^4 x_7^2+x_1^4 x_2^6 x_3 x_4^2 x_6^5 x_7^2+x_1^3 x_2^7 x_3 x_5^6 x_7^3+x_1^3 x_2^7 x_3 x_5^3 x_6^3 x_7^3+x_1^3 x_2^7 x_3 x_6^6 x_7^3+x_1^4 x_2^6 x_4^6 x_7^4
\\+x_1^3 x_2^7 x_4^3 x_5^3 x_7^4+x_1^2 x_2^8 x_5^4 x_6^2 x_7^4+x_1^3 x_2^7 x_4^3 x_6^3 x_7^4+x_1^2 x_2^8 x_5^3 x_6^3 x_7^4+x_1^2 x_2^8 x_5^2 x_6^4 x_7^4+x_1^2 x_2^8 x_4 x_5^4 x_7^5
\\\left.+x_1^2 x_2^8 x_4 x_5^2 x_6^2 x_7^5+x_1^2 x_2^8 x_4 x_6^4 x_7^5+x_1 x_2^9 x_5^3 x_7^7+x_1 x_2^9 x_5^2 x_6 x_7^7+x_1 x_2^9 x_5 x_6^2 x_7^7+x_1 x_2^9 x_6^3 x_7^7+x_2^{10} x_7^{10}\right)\\+\cdots.
\end{multline}

Let 
\begin{equation*}
P_{n}^{\gamma}(q,x) : = \sum_{\sg\in\Sn{n}(132)} q^{\coinv(\sg)}x^{\occr_{\gamma}(\sg)},
\end{equation*}

We can evaluate appropriate variables of $Q_{n,132}^{\Ga_2\cup\Ga_3}(x_1,\ldots,x_7)$ at $1$ and use the relation that $\inv(\sg)+\coinv(\sg)=\binom{n}{2}$ to get the following corollary. We use the shorthand $P_n$ for $P_n^\ga$ in the RHS of each equation. 

\ 
\begin{corollary}\label{corollary:S3}
	We have the following equations.
	\begin{eqnarray}
	\label{S31}	P_{0}^{\gamma}(q,x)&=&1\textnormal{ \ \ \ for each pattern }\gamma,\\
	\label{S32}	P_{n}^{123}(q,x) &=& \sum_{k=1}^{n} q^{k-1} P_{k-1}(qx,x) P_{n-k}(q,x),\\
	\label{S33}	P_{n}^{213}(q,x) &=& \sum_{k=1}^{n} q^{k-1}x^{\frac{(k-1)(k-2)}{2}} P_{k-1}\left(\frac{q}{x},x\right) P_{n-k}(q,x),\\
	\label{S34}	P_{n}^{231}(q,x) &=& \sum_{k=1}^{n} q^{k-1}x^{(k-1)(n-k)} P_{k-1}\left(qx^{(n-k)},x\right) P_{n-k}(q,x),\\
	\label{S366}	P_{n}^{312}(q,x) &=& \sum_{k=1}^{n} q^{k-1} P_{k-1}\left(q,x\right) P_{n-k}(q x^k,x),\qquad \mbox{ \ \ and}\\
	\label{S35}	P_{n}^{321}(q,x) &=& \sum_{k=1}^{n} q^{k-1}x^{\frac{(n-k)(kn-4k+2)}{2}} P_{k-1}\left(\frac{q}{x^{n-k}},x\right) P_{n-k}\left(\frac{q}{x^{k}},x\right).
	\end{eqnarray}
\end{corollary}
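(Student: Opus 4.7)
The plan is to derive each of the five nontrivial formulas by specializing the variables in (\ref{eqnS3}) appropriately. The base case (\ref{S31}) is immediate: $\mathcal{S}_0(132)$ consists only of the empty permutation, which contributes $1$ to $P_0^\gamma(q, x)$ for every $\gamma$.

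Since $\coinv(\sigma) = \occr_{12}(\sigma)$, the factor $x_1^{\occr_{12}(\sigma)}$ becomes $q^{\coinv(\sigma)}$ upon setting $x_1 = q$. For a fixed pattern $\gamma$ of length three, let $x_j$ denote the variable in $Q_n$ that tracks $\gamma$, namely $x_3, x_4, x_5, x_6, x_7$ for $\gamma = 123, 213, 231, 312, 321$ respectively. Setting $x_j = x$ and all remaining $x_i = 1$ produces precisely $P_n^\gamma(q, x)$, so the task is to simplify the right-hand side of (\ref{eqnS3}) under this specialization.

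For $\gamma \in \{123, 231, 312\}$ the unused variable $x_2$ remains at $1$ throughout, including inside the arguments of the inner $Q_{k-1}$ and $Q_{n-k}$. The substitution is then direct: one records the prefactor (a power of $q$, together with $x^{(k-1)(n-k)}$ for $\gamma = 231$ coming from $x_5^{(k-1)(n-k)}$), and each inner $Q$ becomes $P^\gamma$ evaluated at a rescaled first argument ($qx$ in the first factor for $\gamma = 123$, $qx^{n-k}$ in the first factor for $\gamma = 231$, and $qx^k$ in the second factor for $\gamma = 312$). This yields (\ref{S32}), (\ref{S34}), and (\ref{S366}).

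For $\gamma \in \{213, 321\}$ the second argument $x_2 x_4 x_7^{n-k}$ of $Q_{k-1}$ becomes a nontrivial power of $x$, and for $\gamma = 321$ the second argument $x_2 x_7^k$ of $Q_{n-k}$ likewise becomes $x^k$. To convert back into $P^\gamma$, I use the identity $\inv(\pi) + \coinv(\pi) = \binom{m}{2}$ for $\pi \in \mathcal{S}_m(132)$, which gives
\begin{equation*}
Q_m(q,\, x^a,\, \ldots) \;=\; x^{a \binom{m}{2}} \, P_m^\gamma\!\left(q/x^a,\, x\right)
\end{equation*}
after the remaining specializations are applied. Using this on the $Q_{k-1}$ factor with $a = 1$ for $\gamma = 213$ and with $a = n - k$ for $\gamma = 321$, and on the $Q_{n-k}$ factor with $a = k$ for $\gamma = 321$, produces (\ref{S33}) and (\ref{S35}). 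The one mildly nontrivial step is simplifying the combined $x$-exponent for $\gamma = 321$, namely $(n-k)\binom{k-1}{2} + k\binom{n-k}{2}$; factoring out $(n-k)/2$ and expanding $(k-1)(k-2) + k(n-k-1) = kn - 4k + 2$ gives exactly $(n-k)(kn - 4k + 2)/2$, matching (\ref{S35}). This algebraic reduction is the main, though slight, obstacle.
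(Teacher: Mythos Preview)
Your proposal is correct and follows exactly the approach the paper indicates: specialize $x_1=q$, the appropriate $x_j=x$, and all other variables to $1$ in (\ref{eqnS3}), then use $\inv(\pi)+\coinv(\pi)=\binom{m}{2}$ to handle the cases ($\gamma=213,321$) where the second argument of an inner $Q$ becomes a nontrivial power of $x$. The paper states this method in one line without working out the details; your write-up fills them in accurately, including the exponent simplification for (\ref{S35}).
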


Note that as a consequence of \lref{1}, the two polynomials $P_{n}^{231}(q,x)$ and $P_{n}^{312}(q,x)$ are identical, and \coref{S3} gives two different recursions for such polynomials.

We can also compute $Q_{132}^{\ga}(t,x)=\sum_{n\geq 0} t^n P_{n}^{\gamma}(1,x)$ where $\ga$ has length three as follows.
\begin{multline}
Q_{132}^{123}(t,x)=1 + t + 2 t^2 + t^3 (4 + x) + t^4 (8 + 4 x + x^2 + x^4) + 
t^5 (16 + 12 x + 5 x^2 + x^3 + 4 x^4 + 2 x^5 + x^7 + x^{10}) \\+ 
t^6 (32 + 32 x + 18 x^2 + 6 x^3 + 13 x^4 + 10 x^5 + 3 x^6 + 4 x^7 + 
3 x^8 + 5 x^{10} + 2 x^{11} + 2 x^{13} + x^{16} + x^{20}) +\cdots,
\end{multline}
\begin{multline}
Q_{132}^{213}(t,x)=1 + t + 2 t^2 + t^3 (4 + x) + t^4 (8 + 2 x + 3 x^2 + x^3) + 
t^5 (16 + 5 x + 6 x^2 + 5 x^3 + 3 x^4 + 5 x^5 + 2 x^6)
\\ + 
t^6 (32 + 12 x + 16 x^2 + 11 x^3 + 9 x^4 + 10 x^5 + 10 x^6 + 5 x^7 + 
10 x^8 + 10 x^9 + 6 x^{10} + x^{12})+\cdots,
\end{multline}
\begin{multline}
Q_{132}^{231}(t,x)=Q_{132}^{312}(t,x)\\=1 + t + 2 t^2 + t^3 (4 + x) + t^4 (8 + 2 x + 3 x^2 + x^3) + 
t^5 (16 + 4 x + 6 x^2 + 7 x^3 + 4 x^4 + 2 x^5 + 3 x^6) \\+ 
t^6 (32 + 8 x + 12 x^2 + 14 x^3 + 17 x^4 + 7 x^5 + 17 x^6 + 5 x^7 + 
5 x^8 + 8 x^9 + 5 x^10 + 2 x^12) +\cdots,
\end{multline}
\begin{multline}
Q_{132}^{321}(t,x)= 1 + t + 2 t^2 + t^3 (4 + x) + t^4 (7 + 3 x + 3 x^2 + x^4) + 
t^5 (11 + 5 x + 9 x^2 + 3 x^3 + 6 x^4 + 3 x^5 + 4 x^7 + x^{10}) \\+ 
t^6 (16 + 7 x + 15 x^2 + 9 x^3 + 17 x^4 + 7 x^5 + 10 x^6 + 12 x^7 + 
7 x^8 + 6 x^9 + 7 x^{10} + 3 x^{11} + 6 x^{12} + 4 x^{13} + 5 x^{16} + x^{20})\\ +\cdots.
\end{multline}

\subsection{Longer patterns whose distributions satisfy good recursions}

We have  built recursions for generating functions which give the distribution of all patterns of length two or three in $\Snn(132)$. This leads to a natural question --- can we give recursions for the generating functions tracking any pattern in $\Snn(132)$ like we have done in Section 3.1 and Section 3.2? We notice that though we can always use the recursive counting method, we do not always obtain clear recursions like \tref{Q12} and \tref{S3Sn132}.

Let $\ga$ be a permutation pattern. We say that the distribution of the pattern $\ga$ in $\Snn(132)$ \emph{satisfies a good recursion} if there exist $s$ permutations $\ga_1,\ldots,\ga_s$ of length at least two such that the generating function $Q_n(x,x_1,\ldots,x_s)=Q_{n,132}^{\ga,\ga_1,\ldots,\ga_s}(x,x_1,\ldots,x_s)$
satisfies that $Q_0(x,x_1,\ldots,x_s)=1$, and
\begin{equation}\label{good}
Q_n(x,x_1,\ldots,x_s)=\sum_{i=1}^{n}q(X)Q_{i-1}(p_1(X),\ldots,p_{s+1}(X)) Q_{n-i}(q_1(X),\ldots,q_{s+1}(X))
\end{equation}
for $n\geq 1$,
where $X=\{x,x_1,\ldots,x_s\}$, and $q(X),p_1(X),\ldots,p_{s+1}(X),q_1(X),\ldots,q_{s+1}(X)$ are $2s+3$ rational functions about variables in $X$ such that the powers of variables in the numerator and denominator are polynomials of $n$ and $i$.

Thus, for any pattern $\ga$ whose distribution satisfies a good recursion, there is a $Q_n(x,x_1,\ldots,x_s)$ defined as above such that it can be computed recursively from the functions $Q_i(x,x_1,\ldots,x_s)$ for $i=0,\ldots,n-1$. We have the following theorem about the number of permutations in $\Snn(132)$ whose distributions satisfy good recursions.

\ 
\begin{theorem}\label{theorem:an}
	Let $\{a_n\}_{n\geq0}$ be the integer sequence defined by 
	\begin{equation}
	a_0=a_1=1,\ a_2=2,\mbox{ and }a_n=a_{n-1}+2a_{n-2}+a_{n-3}.
	\end{equation}
	Then the number of permutations in $\Snn(132)$ whose distributions satisfy good recursions is at least $a_n$.
\end{theorem}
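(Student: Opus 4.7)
The plan is to exhibit an explicit subfamily $\mathcal{F}_n \subseteq \Snn(132)$ of patterns whose distributions admit good recursions, and show $|\mathcal{F}_n|$ satisfies the same recurrence as $a_n$. First I would analyze which structural properties on $\pi$ force a good recursion. Applying the recursive counting method of Section 3 to $\pi \in \Sn{r}(132)$ with $\pi_s = r$, every cross term in $\occr_\pi(\sg)$ has the form $\occr_\mu(A(\sg)) \cdot \occr_\nu(B(\sg))$, coming either from case (d) (with $(\mu,\nu) = (A(\pi), B(\pi))$) or from a skew decomposition $\pi = \mu \ominus \nu$ appearing in cases (c)/(c'). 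For the product form~(\ref{good}) to hold, each such cross term must factor trivially, i.e.\ $|\mu| = 1$ or $|\nu| = 1$, so that one factor collapses to the constant $k-1$ or $n-k$. This forces two local conditions: (L1) the max of $\pi$ lies at position $s \in \{1, 2, r-1, r\}$, and (L2) every skew decomposition $\pi = \pi_1 \ominus \pi_2$ has $|\pi_1| = 1$ or $|\pi_2| = 1$.

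Next I would recursively define the family. Take $\mathcal{F}_n = \Snn(132)$ for $n \leq 3$, and for $n \geq 4$ let $\mathcal{F}_n$ be the disjoint union of three types: (I) $\pi = A \oplus 1$ with $A \in \mathcal{F}_{n-1}$ (max at position $n$); (II) $\pi = \tilde A \cdot n \cdot 1$ with $\red(\tilde A) \in \mathcal{F}_{n-2}$ (max at position $n-1$); and (III) $\pi = 1 \ominus B$ with $B \in \mathcal{F}_{n-1}$ and $B_1 \neq n-1$ (max at position $1$). By induction on $n$ I would verify that each $\pi \in \mathcal{F}_n$ satisfies (L1) and (L2), and that every auxiliary pattern appearing in the recursion, namely $A(\pi)$, $B(\pi)$, $\Abar(\pi)$, together with the size-$\geq 2$ components of any skew decomposition of $\pi$, again lies in some smaller $\mathcal{F}_{r'}$. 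The constraint $B_1 \neq n-1$ in type (III) is precisely what prevents $\pi = 1 \ominus B$ from acquiring a forbidden skew decomposition at position $2$, while the trailing $1$ in types (I) and (II) annihilates almost all potential skew decompositions of $\pi$.

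Finally I would establish the counting recurrence. Set $g_n = |\mathcal{F}_n|$ and $h_n = |\{B \in \mathcal{F}_n : B_1 \neq n\}|$. The three-type decomposition gives $g_n = g_{n-1} + g_{n-2} + h_{n-1}$ for $n \geq 4$; moreover, the type-(III) subfamily is exactly $\{\pi \in \mathcal{F}_n : \pi_1 = n\}$, so $h_n = g_n - h_{n-1}$ for $n \geq 4$. Substituting $h_{n-2} = g_{n-1} - g_{n-2} - g_{n-3}$ (which follows from the level-$(n{-}1)$ identity) eliminates the $h$-terms to yield $g_n = g_{n-1} + 2g_{n-2} + g_{n-3}$ for $n \geq 5$, and the cases $n = 3, 4$ are checked directly. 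The base values $g_0 = g_1 = 1$ and $g_2 = 2$ match $a_0, a_1, a_2$, hence $g_n = a_n$, giving $|\mathcal{F}_n| \geq a_n$. The main obstacle will be Step 2: the most delicate verification is that a type-(III) element $\pi = 1 \ominus B$ with $B$ ending in $1$ admits a second skew decomposition $\pi = \Abar(\pi) \ominus 1$, and that the corresponding auxiliary $\Abar(\pi)$ — whose structure unfolds to $1 \ominus (\red(\tilde A) \oplus 1)$ after applying the type-(II) structure of $B$ — again lies in $\mathcal{F}_{n-1}$, so that the set of tracked auxiliaries remains finite and closed.
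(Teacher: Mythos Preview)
Your approach is correct and essentially the same as the paper's: both construct the identical family of ``good'' patterns recursively and verify the count satisfies $a_n = a_{n-1} + 2a_{n-2} + a_{n-3}$. The paper parametrizes the family via four length-increasing operations $\sigma \mapsto \sigma', \sigma'', \sigma''', \sigma''''$ (of length increments $1,2,2,3$), which map directly onto your three types: your type~(I) is $\sigma'$, type~(II) is $\sigma'''$, and type~(III) with the side condition $B_1 \neq n-1$ is exactly $\{\sigma'', \sigma''''\}$, since that condition forces $B$ to be of type~(I) or~(II). Consequently the paper reads off the recurrence directly from the four operations, whereas you introduce the auxiliary $h_n$ and eliminate it; both routes are equally valid.

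One small imprecision: your condition~(L1) allows $s=2$, but for $r \geq 4$ the skew decomposition $\gamma = \Abar(\gamma) \ominus B(\gamma)$ at position $s=2$ already has both parts of size $\geq 2$, so (L2) alone forces $s \in \{1, r-1, r\}$. This is harmless since your actual construction only uses those three positions, and the paper makes the same observation (phrased as ``we cannot have $|A(\gamma)| \geq 1$ and $|B(\gamma)| \geq 2$'').
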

\begin{proof}
	Given $\sg=\sg_1\cdots\sg_n\in\Snn(132)$, we define
	\begin{eqnarray}
	\sg'&: =&\sg_1\cdots\sg_n (n+1),\\
	\sg''&: =&(n+2)\sg_1\cdots\sg_n (n+1),\\
	\sg'''&: =&(\sg_1+1)\cdots(\sg_n+1) (n+2)1,\mbox{ \ \ and }\\
	\sg''''&: =&(n+3)(\sg_1+1)\cdots(\sg_n+1) (n+2)1.
	\end{eqnarray}
	
	We construct a set $\Ga$ of permutation patterns as follows. We first let the empty permutation $\emptyset$ be an element of $\Ga$. Next, for each permutation $\sg\in\Ga$, we let $\sg',\sg'',\sg''',\sg''''\in\Ga$. Clearly, each permutation in $\Ga$ is 132-avoiding, and the number of permutations in $\Ga\cap\Snn$ is $a_n$ based on the recursive construction of the set $\Ga$. 
	
	From the recursive counting method, the distributions of $\sg',\sg'',\sg''',\sg''''$ satisfy good recursions as long as the distribution of $\sg$ satisfies a good recursion. Thus
	the distribution of each permutation in $\Ga$ satisfies a good recursion, which proves the theorem.
	
	In fact, when we count the number of occurrences of $\ga\in\Sn{r}(132)$ in $\sg\in\Snn{(132)}$ with recursive counting method, we shall break the pattern $\ga$ into three parts:  $A(\ga)$, $r$ and $B(\ga)$. 
	$\ga$ fails to satisfy a good recursion by the case (c') of the recursive counting method if $\ga=\pi\ominus\tau$ for some permutations $\pi,\tau$ of length at least two (in this case the generating function cannot be recursively computed in the way of equation (\ref{good}) since the RHS of equation (\ref{good}) never gives the product $\occr_{\pi}(A(\sg)) \cdot \occr_{\tau}(B(\sg))$). This is saying that
	we cannot have $|A(\ga)|\geq 1$ and $|B(\ga)|\geq 2$ simultaneously. 
	
	If $B(\ga)$ is empty, then $\ga=A(\ga)'$. If $B(\ga)=1$, then $\ga=A(\ga)'''$.
	If $|B(\ga)|\geq2$ and $A(\ga)$ is empty, then we shall decompose $B(\ga)$ into three parts: $A(B(\ga))$, $r-1$ and $B(B(\ga))$. If $B(B(\ga))$ is empty, then $\ga=A(B(\ga))''$. If $B(B(\ga))$ is not empty, then $B(B(\ga))$ can only be of size one to make $\ga$ not separable into a skew sum of two nontrivial permutations, and $\ga=A(B(\ga))''''$. 
	
	Thus, $\Ga$ collects all permutations that satisfy good recursions
	if we use the recursive counting method, and there are exactly $a_n$ permutations in $\Snn$ whose distributions satisfy good recursions using the recursive counting method (there might be more permutations in $\Snn$ whose distributions satisfy good recursions, but for other reasons not accessible by the recursive counting method).
\end{proof}

The sequence $\{a_n\}_{n\geq0}=\{1,1,2,5,10,22,47,101,217,\ldots\}$ appears in OEIS of \cite{OEIS} as sequence A101399.

We shall give an example of a longer pattern $\ga=12\cdots m$ whose distribution satisfies a good recursion in the following theorem. Note that this gives a way to count the number of occurrences of $12\cdots m$ in $\Snn(132)$ different from that of \cite{MV1}.

\ 
\begin{theorem}\label{theorem:Sn132m}
	Given $m\geq 2$ and $n\geq 0$, we let
	\begin{eqnarray}
	Q^{(m)}_{n,132}(x_2,x_3,\ldots,x_m)&: =&\sum_{\sg\in\Snn(132)}x_2^{\occr_{12}(\sg)}x_3^{\occr_{123}(\sg)}\cdots x_m^{\occr_{12\cdots m}(\sg)}\mbox{ and}\\
	Q^{(m)}_{132}(t,x_2,x_3,\ldots,x_m)&: =&\sum_{n\geq 0}t^nQ^{(m)}_{n,132}(x_2,x_3,\ldots,x_m),
	\end{eqnarray}
	then we have the following equations,
	\begin{eqnarray}
	\label{132m1} Q^{(m)}_{n,132}(x_2,\ldots,x_m)&\hspace*{-3mm}=&\hspace*{-3mm}\sum_{k=1}^{n} x_2^{k-1}Q^{(m)}_{k-1,132}(x_2x_3,x_3x_4,\ldots,x_{m-1}x_m,x_m)Q^{(m)}_{n-k,132}(x_2,\ldots,x_m),\ \ \\
	\label{132m2} Q^{(m)}_{132}(t,x_2,\ldots,x_m)&\hspace*{-3mm}=&\hspace*{-3mm}1+tQ^{(m)}_{132}(tx_2,x_2x_3,x_3x_4,\ldots,x_{m-1}x_m,x_m)Q^{(m)}_{132}(t,x_2,\ldots,x_m).
	\end{eqnarray}
\end{theorem}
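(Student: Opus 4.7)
The plan is to apply the recursive counting method of Section 3 to the single pattern $\ga = 12\cdots j$ for each $j$ between $2$ and $m$ simultaneously, using that $12\cdots j$ cannot be written as a nontrivial skew sum (so case (c') of the recursive counting method does not arise). Given $\sg \in \Snn(132)$ with $\sg_k = n$, I break $\sg$ into $A(\sg) \in \Sn{k-1}(132)$, the entry $n$, and $B(\sg) \in \Sn{n-k}(132)$. Because the pattern is strictly increasing and $n$ is the maximum entry of $\sg$, in any occurrence of $12\cdots j$ the letter $n$ can only play the role of $j$; moreover since every entry of $A(\sg)$ exceeds every entry of $B(\sg)$, there are no occurrences of $12\cdots j$ that stretch across $A(\sg)$ and $B(\sg)$ without using $n$. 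Cataloguing the cases from the recursive counting method gives, for each $j \ge 2$,
\begin{equation*}
\occr_{12\cdots j}(\sg) \;=\; \occr_{12\cdots j}(A(\sg)) + \occr_{12\cdots j}(B(\sg)) + \occr_{12\cdots (j-1)}(A(\sg)),
\end{equation*}
where the last term comes from extending an occurrence of $12\cdots(j-1)$ in $A(\sg)$ by $n$ (and for $j=2$ this last term is simply $\occr_1(A(\sg)) = k-1$).

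Next I substitute these identities into the definition of $Q^{(m)}_{n,132}(x_2,\ldots,x_m)$ and split the sum according to the position $k$ of $n$, so that the sum factors into independent sums over $\pi = A(\sg) \in \Sn{k-1}(132)$ and $\tau = B(\sg) \in \Sn{n-k}(132)$. The $\tau$-sum is unchanged and reproduces $Q^{(m)}_{n-k,132}(x_2,\ldots,x_m)$. For the $\pi$-sum, the crucial bookkeeping is that each statistic $\occr_{12\cdots j}(\pi)$ with $2 \le j \le m-1$ picks up a factor of $x_j$ from its own term together with a factor of $x_{j+1}$ from the $\occr_{12\cdots(j+1)}(\sg)$ contribution, while $\occr_{12\cdots m}(\pi)$ only gets its own $x_m$. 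Thus the $\pi$-sum evaluates to $Q^{(m)}_{k-1,132}(x_2x_3, x_3x_4, \ldots, x_{m-1}x_m, x_m)$, and the leftover constant is $x_2^{k-1}$. Summing on $k$ gives equation (\ref{132m1}).

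Equation (\ref{132m2}) then follows by the standard manipulation of multiplying (\ref{132m1}) by $t^n$ and summing over $n \ge 1$, using the initial value $Q^{(m)}_{0,132} = 1$. Writing $t^n = t \cdot t^{k-1} \cdot t^{n-k}$ and interchanging the order of summation identifies the inner sums as $Q^{(m)}_{132}(tx_2, x_2x_3, \ldots, x_{m-1}x_m, x_m)$ and $Q^{(m)}_{132}(t, x_2, \ldots, x_m)$ respectively.

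The only real subtlety is the combinatorial step of verifying that the formula for $\occr_{12\cdots j}(\sg)$ contains no other cross-term contributions; once that is in hand, both identities are obtained by routine rearrangement. The bookkeeping of which occurrences survive after substituting $x_j \mapsto x_j x_{j+1}$ in the $\pi$-factor (and stopping at $j = m$) is the main point to execute carefully, but it follows the exact pattern already used in the proofs of \tref{Q12} and \tref{S3Sn132}.
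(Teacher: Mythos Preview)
Your proposal is correct and follows essentially the same approach as the paper: both use the recursive counting method to obtain $\occr_{12\cdots j}(\sg)=\occr_{12\cdots j}(A(\sg))+\occr_{12\cdots j}(B(\sg))+\occr_{12\cdots (j-1)}(A(\sg))$, and then factor the sum over $\sg$ according to the position $k$ of $n$. Your write-up is slightly more explicit about why no cross-terms from case (c') arise and about the variable substitution $x_j\mapsto x_jx_{j+1}$ in the $\pi$-factor, but the argument is the same as the paper's.
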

\begin{proof}
	We shall consider the distribution of the pattern $\ga_s=12\cdots s$ for $s=2,\ldots,m$ in $\Snn(132)$.
	
	Given $\sg\in\Snn(132)$ such that $\sg_k=n$, we have $A(\sg)\in\Sn{k-1}(132)$ and $B(\sg)\in\Sn{n-k}(132)$. By the recursive counting method, we have 
	\begin{eqnarray}
	\occr_{12\cdots s}(\sg)&=&\occr_{12\cdots s}(A(\sg))+\occr_{12\cdots s}(B(\sg))+\occr_{12\cdots (s-1)}(A(\sg)).
	\end{eqnarray}
	Thus,
	\begin{eqnarray}
	Q^{(m)}_{n,132}(x_2,\ldots,x_m) \hspace*{-3mm}
	&=&  \sum_{\sg\in\Sn{n}(132)} \prod_{i=2}^{m}x_i^{\occr_{12\cdots i}(\sg)} \nonumber\\
	&=& \sum_{k=1}^{n} \sum_{\pi\in\Sn{k-1}(132)} \sum_{\tau\in\Sn{n-k}(132)} 
	\prod_{i=2}^{m}x_i^{\occr_{12\cdots i}(\pi)}\cdot
	x_i^{\occr_{12\cdots i}(\tau)}\cdot
	x_i^{\occr_{12\cdots (i-1)}(\pi)}
	\nonumber\\
	&=&\hspace*{-2mm}\sum_{k=1}^{n} x_2^{k-1}Q^{(m)}_{k-1,132}(x_2x_3,x_3x_4,\ldots,x_{m-1}x_m,x_m) Q^{(m)}_{n-k,132}(x_2,\ldots,x_m),\ \ 
	\end{eqnarray}
	which proves equation (\ref{132m1}), and equation (\ref{132m2}) follows immediately from equation (\ref{132m1}).
\end{proof}

This theorem can be seen as a generalization of \tref{Q12} of \cite{FH}.

\subsection{The distribution of patterns of length four in $\Snn(132)$}
Let $\Ga_4=\{1234, 2134, 2314, 2341, 3124, 3214, 3241, 3412, 
3421, 4123, 4213, 4231, 4312, 4321\}$ be the set of patterns in $\Sn{4}(132)$. 
By \tref{an}, there are 10 of the 14 patterns in $\Ga_4$ satisfying good recursions. To track all the 14 patterns in $\Sn{4}(132)$, we shall refine the generating function $Q_n$ by the number of coinversions and define
\begin{equation}
Q_{n,i}(x_1,\ldots,x_{19}): =
Q_{n,132}^{\Ga_2\cup\Ga_3\cup\Ga_4}(x,1,x_1,\ldots,x_{19})\big|_{x^i},
\end{equation}
then
\begin{equation}\label{allS4}
Q_{n,132}^{\Ga_2\cup\Ga_3\cup\Ga_4}(x_1,\ldots,x_{21})=\sum_{i=0}^{\binom{n}{2}}x_1^i x_2^{\binom{n}{2}-i} Q_{n,i}(x_3,\ldots,x_{21}).
\end{equation}

We let $x_i$ track the occurrences of length three patterns and $y_i$ track the occurrences of length four patterns, then we have the following theorem which gives recursive relations for the generating function 
$Q_{n,i}(x_1,\ldots,x_5,y_1,\ldots,y_{14})$.

\ 
\begin{theorem}\label{theorem:S4Sn132}The function $Q_{n,i}(x_1,\ldots,x_5,y_1,\ldots,y_{14})$ satisfies the following recursion,
	\begin{eqnarray}\label{eqnS4}
	&&Q_{0,0}(x_1,\ldots,x_5,y_1,\ldots,y_{14}) =1,\\
	&&Q_{n,i}(x_1,\ldots,x_5,y_1,\ldots,y_{14}) =0\mbox{ for }i<0\mbox{ or }i>\binom{n}{2},\mbox{\ and\ \ \ \ \ \ \ \ \ \ \ \ \ \ \ \ \ \ \ \ \ \ \ \ \ \ \ \ \ \ \ \ \ \ \ \ \ \ \ \ \ \ \ }
	\end{eqnarray}
	\vspace*{-10mm}
	\begin{multline}
	\ \ \ Q_{n,i}(x_1,\ldots,x_5,y_1,\ldots,y_{14}) =\\
	\sum_{k=1}^{n} \sum_{j=0}^{i+1-k} 
	x_{1}^j 
	x_{2}^{\binom{k - 1}{2}- j} 
	x_{3}^{(n - k) (k + j - 1)} 
	x_{4}^{k (i + 1 - k - j)} 
	x_{5}^{(n - k) \left(\binom{k - 1}{2} - j\right) + k \left(\binom{n - k}{2} + k + j - i - 1\right)}
	y_{4}^{j (n - k)}\\\cdot
	y_{7}^{\left(\binom{k - 1}{2} - j\right) (n - k)} 
	y_{8}^{(j + k - 1) (i + 1 - k - j)} 
	y_{9}^{(j + k - 1) \left(\binom{n - k}{2} + k + j - i - 1\right)}
	y_{13}^{\left(\binom{k - 1}{2} - j\right) (i + 1 - k - j)} \\\cdot
	y_{14}^{\left(\binom{k - 1}{2} - j\right) \left(\binom{n - k}{2} + k + j - i - 1\right)) }\\\cdot
	Q_{k-1,j}(x_{1} y_{1} y_{4}^{n - k}, x_{2} y_{2} y_{7}^{n - k}, x_{3} y_{3} y_{9}^{n - k}, x_{4} y_{5} y_{12}^{n - k}, x_{5} y_{6} y_{14}^{n - k}, y_{1}, \ldots, y_{14}) \\\cdot
	Q_{n - k, i + 1 - k - j}(x_{1} y_{10}^k, x_{2} y_{11}^k, x_{3} y_{12}^k, x_{4} y_{13}^k, x_{5} y_{14}^k, y_{1}, \ldots , y_{14}).
	\end{multline}
\end{theorem}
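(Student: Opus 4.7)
The plan is to apply the recursive counting method used in the proofs of \tref{Q12} and \tref{S3Sn132}, now refined by additionally tracking the coinversion statistic. Fix $\sg\in\Snn(132)$ with $\sg_k=n$ and let $A=A(\sg)\in\Sn{k-1}(132)$ and $B=B(\sg)\in\Sn{n-k}(132)$. Since every value of $A$ exceeds every value of $B$ and $n$ exceeds both, the coinversions split cleanly as $\coinv(\sg)=\coinv(A)+\coinv(B)+(k-1)$, with the $k-1$ accounting for the pairs of type $(A\text{-entry},n)$. Setting $j:=\coinv(A)$ therefore forces $\coinv(B)=i+1-k-j$, which explains both the ranges $1\leq k\leq n$, $0\leq j\leq i+1-k$ and the subscript $i+1-k-j$ on the second factor.

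The key point is that knowing $j$ together with $k,n,i$ completely determines
\begin{equation*}
\occr_{12}(A)=j,\ \occr_{21}(A)=\binom{k-1}{2}-j,\ \occr_{12}(B)=i+1-k-j,\ \occr_{21}(B)=\binom{n-k}{2}-(i+1-k-j).
\end{equation*}
For each of the five length-three patterns I reuse formulas (\ref{o123})--(\ref{o321}) from the proof of \tref{S3Sn132}. For each of the fourteen length-four patterns $\ga\in\Ga_4$ I expand $\occr_\ga(\sg)$ via the recursive counting method into $\occr_\ga(A)+\occr_\ga(B)$ plus spanning contributions from cases (a)--(d) and, for the four skew-sum patterns $3412,3421,4312,4321$, from case (c$'$). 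Every such spanning contribution is either a count of a strictly shorter pattern in $A$ or $B$ times an elementary function of $k,n$, or else a product of two length-two pattern counts drawn one from $A$ and one from $B$; in the latter case both factors are explicit polynomials in $j,k,n,i$ by the encoding above. The generating function is then assembled by summing $x_1^{\occr_{123}(\sg)}\cdots y_{14}^{\occr_{4321}(\sg)}$ over $\sg$: contributions of the first type are absorbed into the arguments of $Q_{k-1,j}$ and $Q_{n-k,i+1-k-j}$ through the substitutions $x_1y_1y_4^{n-k},\ldots,x_5y_6y_{14}^{n-k}$ and $x_1y_{10}^k,\ldots,x_5y_{14}^k$, while contributions depending only on $k,n,i,j$ are placed directly in the outer monomial prefactor.

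The principal obstacle is purely combinatorial bookkeeping: for each of the fourteen length-four patterns one must identify which of cases (a)--(d), (c$'$) applies, compute the corresponding subpattern factors, and decide whether each resulting exponent belongs in an argument substitution or in the outer prefactor. The four skew-sum patterns deserve the most care, since their case (c) or (c$'$) contributions produce cross-products such as $\occr_{12}(A)\cdot\occr_{12}(B)=j(i+1-k-j)$ (for $y_8=3412$) and $\occr_{21}(A)\cdot\occr_{21}(B)=\big(\binom{k-1}{2}-j\big)\big(\binom{n-k}{2}+k+j-i-1\big)$ (for $y_{14}=4321$); when combined with the corresponding case-(d) and case-(b) contributions like $(k-1)\occr_{12}(B)$, $k\occr_{21}(B)$ and $(n-k)\occr_{21}(A)$, these recombine into exactly the exponents $(j+k-1)(i+1-k-j)$, $\big(\binom{k-1}{2}-j\big)(i+1-k-j)$, and $\big(\binom{k-1}{2}-j\big)\big(\binom{n-k}{2}+k+j-i-1\big)$ appearing in the displayed recursion.
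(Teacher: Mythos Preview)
Your approach is correct and essentially identical to the paper's: both refine by $j=\coinv(A(\sg))$ so that all length-two counts in $A$ and $B$ become explicit polynomials in $n,k,i,j$, then apply the recursive counting method pattern by pattern and sort each resulting term either into an argument substitution of $Q_{k-1,j}$ or $Q_{n-k,i+1-k-j}$ (when it is a shorter-pattern count in $A$ or $B$ times a polynomial in $n,k$) or into the outer monomial (when it depends only on $n,k,i,j$). The paper's proof is just this, carried out explicitly: it records the uniform formula \eqref{S4a} for the five patterns ending in $4$, lists the nine remaining expansions \eqref{o2341}--\eqref{o4321}, and then instructs the reader to arrange the generating function as in \tref{S3Sn132}.
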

\begin{proof}
	We shall count the number of occurrences of the 19 patterns of length three or four in $\Snn(132)$ using the recursive counting method.
	Let $\sg\in\Snn(132)$ such that $\sg_k=n$, $\occr_{12}(\sg)=i$ and $\occr_{21}(\sg)=j$, we have $A(\sg)\in\Sn{k-1}(132)$, $B(\sg)\in\Sn{n-k}(132)$, and $j=\binom{n}{2}-i$. We shall abbreviate $\occr_{\ga}(A(\sg))$, $\occr_{\ga}(B(\sg))$ and $\occr_{\ga}(\Abar(\sg))$ to $A_\ga$, $B_\ga$ and $\Abar_\ga$ in this proof.
	
	Similar to \tref{S3Sn132}, the formulas for occurrences of 123, 213, 231, 312, 321 are given by equations (\ref{o123}), (\ref{o213}), (\ref{o231}), (\ref{o312}) and (\ref{o321}). Then we shall look at the 14 patterns of length four. Case (a) of the recursive counting method implies that
	\begin{equation}\label{S4a}
	\occr_{\pi_1\pi_2\pi_3 4}(\sg)=A_{\pi_1\pi_2\pi_3 4}+B_{\pi_1\pi_2\pi_3 4}+A_{\pi_1\pi_2\pi_3}
	\end{equation}
	for any $\pi_1\pi_2\pi_3\in\Sn{3}(132)$. For patterns not ending with $4$, it follows from the recursive counting method that
	\begin{eqnarray}
	\label{o2341}
	\occr_{2341}(\sg)&=&A_{2341}+B_{2341}
	+A_{12}\cdot B_1+A_{123}\cdot B_1
	\nonumber\\&=&A_{2341}+B_{2341}+j(n-k)+(n-k)A_{123},
	\\\label{o3241}
	\occr_{3241}(\sg)&=&A_{3241}+B_{3241}
	+A_{21}\cdot B_1+A_{213}\cdot B_1\nonumber\\
	&=&A_{3241}+B_{3241}
	+\left(\binom{k-1}{2}-j\right)(n-k)+(n-k)A_{213},
	\\\label{o3412}
	\occr_{3412}(\sg)&=&A_{3412}+B_{3412}
	+A_{12}B_{12}+A_{1}B_{12}\nonumber\\
	&=&A_{3412}+B_{3412}+(j+k-1)(i+1-k-j),
	\\\label{o3421}
	\occr_{3421}(\sg)&=&A_{3421}+B_{3421}+A_{12}B_{21}
	+A_{1} B_{21}+A_{231} B_{1}
	\nonumber\\
	&=&A_{3421}+B_{3421}
	+(n-k)A_{231}+(j + k - 1) \left(\binom{n - k}{2} + k + j - i - 1\right),
	\\\label{o4123}
	\occr_{4123}(\sg)&=&A_{4123}+B_{4123}+\Abar_{1} B_{123}
	=A_{4123}+B_{4123}+k B_{123},
	\\\label{o4213}
	\occr_{4213}(\sg)&=&A_{4213}+B_{4213}+\Abar_{1} B_{213}
	=A_{4213}+B_{4213}+k B_{213},
	\\\label{o4231}
	\occr_{4231}(\sg)&=&A_{4231}+B_{4231}
	+\Abar_{1} B_{231}+A_{312} B_{1}
	\nonumber\\&=&A_{4231}+B_{4231}+k B_{231}+(n-k)A_{312},
	\\\label{o4312}
	\occr_{4312}(\sg)&=&A_{4312}+B_{4312}
	+\Abar_{1} B_{312}+A_{21} B_{12}
	\nonumber\\
	&=&A_{4312}+B_{4312}+k B_{312}
	+\left(\binom{k - 1}{2} - j\right) (i + 1 - k - j),
	\\\label{o4321}
	\occr_{4321}(\sg)&=&A_{4321}+B_{4321}+\Abar_{1} B_{321}\
	+A_{21} B_{21}+A_{321} B_{1}
	\nonumber\\
	&=&A_{4321}+B_{4321}+ k B_{321} 
	\nonumber\\&&+(n-k)A_{321}+\left(\binom{k - 1}{2} - j\right) \left(\binom{n - k}{2} + k + j - i - 1\right).
	\end{eqnarray}
	Then, one can arrange the generating function $Q_{n,i}(x_1,\ldots,x_5,y_1,\ldots,y_{14})$ in a similar way to equation (\ref{S3arrange}) to prove the recursion.
\end{proof}

We can compute the polynomial $Q_{n,132}^{\Ga_2\cup\Ga_3\cup\Ga_4}(x_1,\ldots,x_{21})
= \sum_{i=0}^{\binom{n}{2}}x_1^i x_2^{\binom{n}{2}-i} Q_{n,i}(x_3,\ldots,x_{21})$ efficiently by  Mathematica as follows.
\begin{multline}
Q_{132}^{\Ga_2\cup\Ga_3\cup\Ga_4}(t,x_1,\ldots,x_{21})=1+t+t^2(x_1+x_2)+t^3(x_1^3 x_3+x_1^2 x_2 x_4+x_1 x_2^2 x_5+x_1 x_2^2 x_6+x_2^3 x_7)\\
+t^4\left(x_{1}^4 x_{10} x_{2}^2 x_{3} x_{4}^2 x_{5} + x_{1}^3 x_{11} x_{2}^3 x_{3} x_{5}^3 + 
x_{1}^4 x_{12} x_{2}^2 x_{3} x_{4}^2 x_{6} + x_{1}^2 x_{15} x_{2}^4 x_{5}^2 x_{6}^2 + 
x_{1}^3 x_{17} x_{2}^3 x_{3} x_{6}^3\right.\\ + x_{1}^3 x_{13} x_{2}^3 x_{4}^3 x_{7} + 
x_{1}^2 x_{14} x_{2}^4 x_{4} x_{5}^2 x_{7} + x_{1}^2 x_{18} x_{2}^4 x_{4} x_{6}^2 x_{7} + 
x_{1} x_{16} x_{2}^5 x_{5}^2 x_{7}^2 + x_{1} x_{19} x_{2}^5 x_{5} x_{6} x_{7}^2 \\\left.+ 
x_{1} x_{2}^5 x_{20} x_{6}^2 x_{7}^2 + x_{2}^6 x_{21} x_{7}^4 + x_{1}^6 x_{3}^4 x_{8} + 
x_{1}^5 x_{2} x_{3}^2 x_{4}^2 x_{9}\right)+\cdots.
\end{multline}

\section{The functions $Q_{123}^{\gamma}(t,x)$}
We use the bijection $\Psi:\Sn{n}(123) \rightarrow 
\mathcal{D}_n$ of \cite{EliDeu} to study the distribution of the patterns 132 and 231 in $\Snn(123)$. When computing the distribution of 132-distribution, we prove a stronger result about $1m\cdots 2$-distribution in $\Snn(123)$. Our method does not apply for the pattern 321 due to the complexity of the structure of 123-avoiding permutations.

\subsection{The distribution of the pattern $1m\cdots 2$ in $\Snn(123)$}
Given $\sg=\sg_1\cdots\sg_n\in\Snn(123)$. If $\sg_{i_1}\cdots\sg_{i_m}$ is an occurrence of the pattern $1m\cdots2$, then the number $\sg_{i_1}$ must be a left-to-right minimum of $\sg$, otherwise there exists a number $\sg_a<\sg_{i_1}$ with index $a<i_1$, and $(a,i_1,i_m)$ is an occurrence of the pattern 123.

Under the map $\Psi:\Sn{n}(123) \rightarrow \mathcal{D}_n$, the number $\sg_{i_1}$ sits above a peak of the corresponding Dyck path $\Psi(\sg)$. Suppose that the number $\sg_{i_1}$ is on the \thn{d} diagonal of $\Psi(\sg)$, then by (3) of \lref{p2}, there are $d$ numbers to the right of $\sg_{i_1}$ that are bigger than $\sg_{i_1}$, appearing in a decreasing way. It follows immediately that there are $\binom{d}{m-1}$ occurrences of the pattern $1m\cdots2$ at the peak $\sg_{i_1}$ since any $m-1$ of the $d$ numbers to the right of $\sg_{i_1}$ that are bigger than $\sg_{i_1}$ create a pattern $1m\cdots2$ with the number $\sg_{i_1}$.

Now let $c_d(\sg)$ be the number of peaks that are on the \thn{d} diagonal of $\Psi(\sg)$, then 
\begin{equation}\label{ddd}
\occr_{1m\cdots2}(\sg)=\sum_{d\geq 0} c_d(\sg) \binom{d}{m-1}.
\end{equation}
We also let $c_d(P)$ denote the number of peaks that are on the \thn{d} diagonal of a path $P$.

We  define 
\begin{eqnarray}
Q^{(m)}_{n,123}(s,x_2,x_3,\ldots,x_m)&: = &\hspace*{-3mm} \sum_{\sg\in\Snn(123)}s^{\LRmin(\sg)}x_2^{\occr_{12}(\sg)}x_3^{\occr_{132}(\sg)}\cdots x_m^{\occr_{1m(m-1)\cdots 2}(\sg)}\mbox{ \ and \ \ \ \ }\\
Q^{(m)}_{123}(t,s,x_2,x_3,\ldots,x_m)&: = &\sum_{n\geq 0}t^nQ_{n,123}(s,x_2,x_3,\ldots,x_m),
\end{eqnarray}
then we have the following theorem.

\ 
\begin{theorem}\label{theorem:Qm123}
	Given $n\geq 0$ and $m\geq 2$, 
	\begin{multline}
	\label{Qm1231}Q^{(m)}_{n,123}(s,x_2,\ldots,x_m)=sQ^{(m)}_{n-1,123}(s,x_2,\ldots,x_m)\\
	+\sum_{k=2}^{n} Q^{(m)}_{k-1,123}(sx_2,x_2x_3,x_3x_4,\ldots,x_{m-1}x_m,x_m)Q^{(m)}_{n-k,123}(s,x_2,\ldots,x_m),
	\end{multline}
	and
	\begin{multline}
	\label{Qm1232}Q^{(m)}_{123}(t,s,x_2,\ldots,x_m)=1+t(s-1)Q^{(m)}_{123}(t,s,x_2,\ldots,x_m)\\
	+t Q^{(m)}_{123}(t,sx_2,x_2x_3,x_3x_4,\ldots,x_{m-1}x_m,x_m)Q^{(m)}_{123}(t,s,x_2,\ldots,x_m).
	\end{multline}
\end{theorem}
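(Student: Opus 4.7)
The plan is to realize $Q^{(m)}_{n,123}(s,x_2,\ldots,x_m)$ as a weighted sum over Dyck paths and then peel off the first-return block. By the bijection $\Psi:\Snn(123)\to\Dn{n}$ and equation~(\ref{ddd}), introducing the peak weight $u_d:=s\prod_{j=2}^{m}x_j^{\binom{d}{j-1}}$ lets me write
\[
Q^{(m)}_{n,123}(s,x_2,\ldots,x_m) \;=\; \sum_{P\in\Dn{n}}\, \prod_{d\geq 0} u_d^{\,c_d(P)},
\]
since each peak of $P$ on the $d$-th diagonal accounts for one left-to-right minimum of $\Psi^{-1}(P)$ and contributes exactly $\binom{d}{j-1}$ occurrences of $1j(j-1)\cdots 2$ for $j=2,\ldots,m$.

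Next I will apply the first-return decomposition $P=D\,P_1\,R\,P_2$, where $\ret(P)=k$, the block $P_1$ runs from $(0,1)$ to $(k-1,k)$ strictly above the main diagonal, and $P_2\in\Dn{n-k}$. When $k=1$ the block $P_1$ is empty and the initial $DR$ is a single peak on the $0$th diagonal contributing weight $u_0=s$, yielding the term $sQ^{(m)}_{n-1,123}(s,x_2,\ldots,x_m)$. When $k\geq 2$, the constraint that $P_1$ stays strictly above the main diagonal forces $P_1$ to begin with a $D$-step and to end with an $R$-step, so the junctions around $P_1$ introduce no extra peaks of $P$; every peak of $P$ inside the $DP_1R$ block is a peak of $P_1$. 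Shifting $P_1$ down by one row yields a genuine Dyck path $P_1'\in\Dn{k-1}$, and a peak of $P_1'$ on the $d$-th diagonal becomes a peak of $P$ on the $(d+1)$-st diagonal. So the $DP_1R$ block contributes $\sum_{P_1'\in\Dn{k-1}}\prod_d u_{d+1}^{c_d(P_1')}$ while $P_2$ contributes $Q^{(m)}_{n-k,123}(s,x_2,\ldots,x_m)$.

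The decisive algebraic step is to recognize $u_{d+1}$ as the value of $u_d$ under the substitution $(s,x_2,\ldots,x_m)\mapsto(sx_2,\,x_2x_3,\,x_3x_4,\,\ldots,\,x_{m-1}x_m,\,x_m)$. Expanding each exponent via Pascal's identity $\binom{d+1}{j-1}=\binom{d}{j-1}+\binom{d}{j-2}$ and regrouping produces
\[
u_{d+1} \;=\; (sx_2)\cdot(x_2x_3)^{\binom{d}{1}}(x_3x_4)^{\binom{d}{2}}\cdots(x_{m-1}x_m)^{\binom{d}{m-2}}\cdot x_m^{\binom{d}{m-1}},
\]
which matches the stated substitution term by term. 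Hence the $P_1'$-sum equals $Q^{(m)}_{k-1,123}(sx_2,x_2x_3,\ldots,x_{m-1}x_m,x_m)$, and summing over $k\geq 2$ together with the $k=1$ term yields~(\ref{Qm1231}).

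Equation~(\ref{Qm1232}) then follows by multiplying~(\ref{Qm1231}) by $t^n$ and summing over $n\geq 1$: the $k=1$ piece assembles into $t\,s\,Q^{(m)}_{123}(t,s,\ldots)$, while the $k\geq 2$ piece factors as $t\bigl(Q^{(m)}_{123}(t,sx_2,\ldots)-1\bigr)\,Q^{(m)}_{123}(t,s,\ldots)$ (the $-1$ removes the missing $i=0$ term from the $Q^{(m)}_{k-1,123}$-factor with $i=k-1\geq 1$). Adding the $n=0$ constant $1$ and collecting the $tQ^{(m)}_{123}(t,s,\ldots)$ pieces produces~(\ref{Qm1232}). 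The main technical hurdle is the Pascal-identity bookkeeping, in particular verifying the two boundary slots of the substitution that generate the $sx_2$ factor at one end and leave $x_m$ fixed at the other.
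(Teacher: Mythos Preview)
Your proof is correct and follows essentially the same approach as the paper: both transfer the problem to Dyck paths via $\Psi$ and equation~(\ref{ddd}), apply the first-return decomposition $P=DP_1RP_2$, and use Pascal's identity $\binom{d+1}{j-1}=\binom{d}{j-1}+\binom{d}{j-2}$ to recognize the diagonal shift on $P_1$ as the substitution $(s,x_2,\ldots,x_m)\mapsto(sx_2,x_2x_3,\ldots,x_{m-1}x_m,x_m)$. Your version is in fact slightly more explicit than the paper's---you introduce the peak-weight $u_d$, spell out why the junctions create no spurious peaks when $k\geq 2$, and track the $-1$ in deriving~(\ref{Qm1232})---but the underlying argument is the same.
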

\begin{proof}
	We enumerate the pattern occurrences using the Dyck path bijection $\Psi$. Given any Dyck path $P$, we can break the path at the first return to write $P$ as $DP_1RP_2$, where $P_1$ is the path after the first $D$ step before the last $R$ step before the first return, and $P_2$ is the path after the first return.
	
	Let $k=\ret(P)$. By equation (\ref{ddd}), we have 
	\begin{eqnarray*}
		&&Q^{(m)}_{n,123}(s,x_2,\ldots,x_m)\nonumber\\
		&=&  \sum_{\sg\in\Snn(123)}s^{\sum_{d\geq 0} c_d(\sg) \binom{d}{0}} x_2^{\sum_{d\geq 0} c_d(\sg) \binom{d}{1}}x_3^{\sum_{d\geq 0} c_d(\sg) \binom{d}{2}}\cdots x_m^{\sum_{d\geq 0} c_d(\sg) \binom{d}{m-1}}   \nonumber\\
		&=&  \sum_{P\in\Dnn}s^{\sum_{d\geq 0} c_d(P) \binom{d}{0}} x_2^{\sum_{d\geq 0} c_d(P) \binom{d}{1}}x_3^{\sum_{d\geq 0} c_d(P) \binom{d}{2}}\cdots x_m^{\sum_{d\geq 0} c_d(P) \binom{d}{m-1}}  \nonumber\\
		&=&  s \sum_{P_2\in\Dn{n-1}}s^{\sum_{d\geq 0} c_d(P_2) \binom{d}{0}} x_2^{\sum_{d\geq 0} c_d(P_2) \binom{d}{1}}\cdots x_m^{\sum_{d\geq 0} c_d(P_2) \binom{d}{m-1}}
		\nonumber\\&&+\sum_{k=2}^{n}\sum_{P_1\in\Dn{k-1}}s^{\sum_{d\geq 0} c_d(P_1) \binom{d+1}{0}} x_2^{\sum_{d\geq 0} c_d(P_1) \binom{d+1}{1}} \cdots x_m^{\sum_{d\geq 0} c_d(P_1) \binom{d+1}{m-1}} 
		\nonumber\\&&
		\cdot \sum_{P_2\in\Dn{n-k}} s^{\sum_{d\geq 0} c_d(P_2) \binom{d}{0}} x_2^{\sum_{d\geq 0} c_d(P_2) \binom{d}{1}} \cdots x_m^{\sum_{d\geq 0} c_d(P_2) \binom{d}{m-1}} \nonumber\\
		&=&  sQ^{(m)}_{n-1,123}(s,x_2,\ldots,x_m)
		\nonumber\\&&+\sum_{k=2}^{n}\sum_{P_1\in\Dn{k-1}}s^{\sum_{d\geq 0} c_d(P_1) \binom{d}{0}} x_2^{\sum_{d\geq 0} c_d(P_1) \left(\binom{d}{0}+\binom{d}{1}\right)} \cdots x_m^{\sum_{d\geq 0} c_d(P_1) \left(\binom{d}{m-2}+\binom{d}{m-1}\right)} 
		\nonumber\\&&
		\cdot \sum_{P_2\in\Dn{n-k}} s^{\sum_{d\geq 0} c_d(P_2) \binom{d}{0}} x_2^{\sum_{d\geq 0} c_d(P_2) \binom{d}{1}} \cdots x_m^{\sum_{d\geq 0} c_d(P_2) \binom{d}{m-1}} \nonumber\\
		&=&sQ^{(m)}_{n-1,123}(s,x_2,\ldots,x_m)\nonumber\\
		&&+\sum_{k=2}^{n} Q^{(m)}_{k-1,123}(sx_2,x_2x_3,x_3x_4,\ldots,x_{m-1}x_m,x_m)Q^{(m)}_{n-k,123}(s,x_2,\ldots,x_m).
	\end{eqnarray*}
	Equation (\ref{Qm1232}) follows immediately from equation (\ref{Qm1231}).
\end{proof}

Evaluating $m$ at $3$ gives the following corollary for the distribution of the coinversion and the $\occr_{132}$ statistics.

\ 
\begin{corollary}$Q_{n,123}^{(3)}(s,q,x)=\sum_{\sg\in\Snn(123)}s^{\LRmin(\sg)}q^{\coinv(\sg)}x^{\occr_{132}(\sg)}$ satisfies
	\begin{eqnarray}
	Q_{0,123}^{(3)}(s,q,x)=1, \ \ \ Q_{n,123}^{(3)}(s,q,x)=s Q^{(3)}_{n-1,123}+\sum_{k=2}^{n} Q^{(3)}_{k-1,123}(sq,qx,x) Q^{(3)}_{n-k,123}(s,q,x).
	\end{eqnarray}
	Further, 
	\begin{equation}
	Q^{(3)}_{123}(t,s,q,x) = 1+t(s-1)Q^{(3)}_{123}(t,s,q,x)\nonumber\\
	+t Q^{(3)}_{123}(t,sq,qx,x)Q^{(3)}_{123}(t,s,q,x).
	\end{equation}
\end{corollary}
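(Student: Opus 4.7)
The plan is to recognize that this corollary is simply the $m=3$ specialization of \tref{Qm123}, once we identify the statistic $\occr_{12}$ with $\coinv$. Recall from Section 2 that the number of coinversions of $\sg$ equals the number of occurrences of the pattern $12$ in $\sg$, i.e.\ $\coinv(\sg) = \occr_{12}(\sg)$. Therefore the polynomial defined in \tref{Qm123} at $m=3$, namely
\begin{equation*}
Q^{(3)}_{n,123}(s,x_2,x_3) = \sum_{\sg\in\Snn(123)} s^{\LRmin(\sg)} x_2^{\occr_{12}(\sg)} x_3^{\occr_{132}(\sg)},
\end{equation*}
coincides, under the relabeling $x_2 \mapsto q$ and $x_3 \mapsto x$, with the polynomial $Q^{(3)}_{n,123}(s,q,x)$ of the corollary.

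The main step is to write down the $m=3$ instance of equation (\ref{Qm1231}). In that equation, the argument list $(s, x_2, x_3, \ldots, x_m)$ of the first factor inside the sum is replaced by $(sx_2, x_2x_3, x_3x_4, \ldots, x_{m-1}x_m, x_m)$. For $m=3$, this chain of products collapses to $(sx_2, x_2x_3, x_3)$. Substituting $x_2 = q$ and $x_3 = x$, this becomes $(sq, qx, x)$, which is precisely the argument list appearing in the corollary's recursion. The base case $Q^{(3)}_{0,123}(s,q,x) = 1$ is just the empty-permutation specialization.

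The generating-function form follows by the same manipulation as the proof of equation (\ref{Qm1232}) from (\ref{Qm1231}): multiply both sides of the recursion by $t^n$, sum over $n\geq 1$, and recognize the convolution on the right-hand side as a product of two evaluations of $Q^{(3)}_{123}(t,\cdot,\cdot,\cdot)$, with the lone $sQ^{(3)}_{n-1,123}$ term producing the $t(s-1)Q^{(3)}_{123}(t,s,q,x)$ correction (since the $k=1$ term of a full convolution would be $tQ^{(3)}_{123}(t,sq,qx,x)\cdot Q^{(3)}_{n-1,123}$ but we instead have $stQ^{(3)}_{n-1,123}$, or equivalently we rewrite $sQ^{(3)}_{n-1,123}$ as $(s-1)Q^{(3)}_{n-1,123} + Q^{(3)}_{n-1,123}$ and absorb the second piece into the convolution).

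There is no real obstacle; the entire argument is bookkeeping on the substitution $m=3$, $x_2=q$, $x_3=x$. The only thing worth checking carefully is the identification $\coinv = \occr_{12}$, which is standard, and the collapse of the cyclic-shift substitution $(x_2,\ldots,x_m) \mapsto (x_2x_3,\ldots,x_{m-1}x_m,x_m)$ in the short case $m=3$, where it has only two entries $(x_2x_3, x_3)$.
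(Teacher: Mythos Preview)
Your proposal is correct and takes exactly the same approach as the paper, which simply states that ``Evaluating $m$ at $3$ gives the following corollary'' to \tref{Qm123}. Your explicit verification that the substitution $(s,x_2,\ldots,x_m)\mapsto(sx_2,x_2x_3,\ldots,x_{m-1}x_m,x_m)$ collapses to $(sq,qx,x)$ when $m=3$, together with the identification $\coinv=\occr_{12}$, is precisely the bookkeeping the paper leaves implicit.
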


Then we can use the recursive formula to compute $Q^{\{12,132\}}_{123}(t,q,x)=\sum_{n\geq 0}t^n Q_{n,123}^{(3)}(1,q,x)$:
\begin{multline}
Q^{\{12,132\}}_{123}(t,q,x)=1 +t + t^2 (1 + q)  + t^3 (1 + 2 q + q^2 + q^2 x) + 
t^4 (1 + 3 q + 3 q^2 + q^3 + 2 q^2 x + 2 q^3 x\\
 + q^4 x^2 + q^3 x^3) +
t^5 (1 + 4 q + 6 q^2 + 4 q^3 + q^4 + 3 q^2 x + 6 q^3 x + 3 q^4 x + 
3 q^4 x^2 + 2 q^5 x^2 + 2 q^3 x^3 + 2 q^4 x^3\\ + q^6 x^3 + 
2 q^5 x^4 + q^4 x^6 + q^6 x^6)+\cdots.
\end{multline}

By looking at the coefficients of the generating functions, we observe a property that
\begin{equation}\label{132123}
| \{\sg\in\Snn(132):\occr_{12\cdots j}(\sg)=i\}|=|\{\sg\in\Snn(123):\occr_{1j(j-1)\cdots 2}(\sg)=i\}|\textnormal{ \ \ for all }i<j.
\end{equation}
Let $[x^i]_Q$ denote the coefficient of $x^i$ in the function $Q$, then the property above is equivalent to the following theorem.

\ 
\begin{theorem}\label{theorem:132123}
	For any nonnegative integers $i<j$, 
	\begin{equation}\label{coeff}
	[t^n x^i]_{Q_{132}^{1\cdots j}(t,x)}=[t^n x^i]_{Q_{123}^{1j\cdots 2}(t,x)}.
	\end{equation}
\end{theorem}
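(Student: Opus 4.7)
The plan is to interpret both sides of~\eqref{coeff} as counts of the same set of Dyck paths, using the bijections $\Phi:\Snn(132)\to\Dnn$ of \cite{Kr} and $\Psi:\Snn(123)\to\Dnn$ of \cite{EliDeu}. Concretely, I will argue that for $i<j$ both sides enumerate the paths $P\in\Dnn$ whose peaks all lie on diagonals of index at most $j-1$, with exactly $i$ of them on the $(j-1)$-st diagonal.

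For the $123$-side the identity $\occr_{1j\cdots 2}(\sigma)=\sum_d c_d(\Psi(\sigma))\binom{d}{j-1}$ is essentially derived in the paragraph preceding \tref{Qm123}, using that in a $123$-avoiding permutation no non-peak has a larger entry to its right. It follows immediately that $\occr_{1j\cdots 2}(\sigma)<j$ forces every peak of $\Psi(\sigma)$ onto a diagonal of index at most $j-1$ (one peak on diagonal $\geq j$ alone contributes $\binom{j}{j-1}=j$), and in that case $\occr_{1j\cdots 2}(\sigma)=c_{j-1}(\Psi(\sigma))$, since $\binom{d}{j-1}=0$ for $d<j-1$ and $\binom{j-1}{j-1}=1$. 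The heart of the proof will be to establish the analogous description for $\Snn(132)$ through $\Phi$.

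For $\sigma\in\Snn(132)$ and any peak $\sigma_i$ on the $d$-th diagonal of $\Phi(\sigma)$, the $d$ elements in the first quadrant of $(i,\sigma_i)$ provided by Lemma~\ref{lem:p1}(3) must themselves appear in increasing left-to-right order, for otherwise a $132$ pattern would arise with $\sigma_i$ as the ``$1$''; hence exactly $\binom{d}{j-1}$ increasing $j$-subsequences of $\sigma$ start at $\sigma_i$, and a single peak on diagonal $\geq j$ already yields $\occr_{12\cdots j}(\sigma)\geq j$. The subtler step, and the one I expect to be the main obstacle, is to show that under the hypothesis that every peak sits on a diagonal $\leq j-1$ no increasing $j$-subsequence begins at a non-peak: if $\sigma_i$ is a non-peak, let $p=\sigma_q$ be the minimum of $\sigma_1,\ldots,\sigma_{i-1}$; then $p$ is a peak on some diagonal $d\leq j-1$, the entry $\sigma_i$ itself lies in its first quadrant, and every later entry exceeding $\sigma_i$ also lies in that first quadrant and is distinct from $\sigma_i$, so at most $d-1\leq j-2$ such later-larger entries exist, too few to extend $\sigma_i$ to an increasing $j$-chain. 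Putting the two sides together, for $i<j$ both coefficients equal the number of Dyck paths in $\Dnn$ whose peaks all lie on diagonals $\leq j-1$ with exactly $i$ on diagonal $j-1$, yielding the desired equality.
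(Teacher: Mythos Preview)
Your proof is correct and follows essentially the same route as the paper: both arguments show that for $i<j$ each coefficient counts the Dyck paths in $\Dnn$ whose peaks all lie on diagonals of index at most $j-1$, with exactly $i$ peaks on diagonal $j-1$, via the bijections $\Phi$ and $\Psi$. Your treatment of the $132$-side is in fact slightly more explicit than the paper's --- you spell out why the $d$ first-quadrant elements of a peak are increasing and you name the relevant smaller peak $p=\min(\sigma_1,\ldots,\sigma_{i-1})$ when ruling out non-peak starts --- but the underlying reasoning is the same.
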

\begin{proof}
	One can use the recursive equations (\ref{132m1}) and (\ref{Qm1231}) to prove the theorem using induction. Here we shall give an alternative combinatorial proof of the theorem using the Dyck path bijections $\Phi$ and $\Psi$.
	
	By equation (\ref{ddd}), $\sg\in\Snn(123)$ has $i$ occurrences of the pattern $1j(j-1)\cdots 2$ where $j>i$ if and only if the corresponding Dyck path $\Psi(\sg)$ has $i$ peaks on the $j-1^{\textnormal{st}}$ diagonal and no peaks on the \thn{k} diagonal for all $k\geq j$.
	
	On the other hand, let $\pi\in\Snn(132)$ and $\occr_{12\cdots j}(\pi)=i$. The corresponding Dyck path $\Phi(\pi)$ has no peaks on the \thn{k} diagonal for all $k\geq j$. Otherwise, if $\pi_i$ is on the \thn{k} diagonal for some $k\geq j$, there are $k$ numbers to the right of $\pi_i$ that are greater than $\pi_i$, forming a length $k+1$ increasing subsequence together with $\pi_i$. There are $\binom{k+1}{j}>i$ occurrences of the pattern $12\cdots j$ in this subsequence, which leads to a contradiction.
	
	Further, if $\pi_{\ell_1}\cdots\pi_{\ell_j}$ is an occurrence of the pattern $12\cdots j$, then $\pi_{\ell_1}$ must be a peak on the $j-1^{\textnormal{st}}$ diagonal, otherwise any peak to the left of $\pi_{\ell_1}$ that is smaller than $\pi_{\ell_1}$ is on the \thn{k} diagonal for some $k\geq j$ by \lref{p1} (c), contradicting with the statement that $\Phi(\pi)$ has no peaks on the \thn{k} diagonal for all $k\geq j$.
	
	Thus, $\pi$ has $i$ occurrences of the pattern $1\cdots j$ where $j>i$ if and only if the corresponding Dyck path $\Phi(\pi)$ has $i$ peaks on the $j-1^{\textnormal{st}}$ diagonal and no peaks on the \thn{k} diagonal for all $k\geq j$, which proves equations (\ref{132123}) and (\ref{coeff}).
\end{proof}

\subsection{The distribution of the pattern $231$ in $\Snn(123)$}
We give recursive formulas for the generating function of $\Snn(123)$ tracking the number of occurrences of the pattern $231$ by refining function $Q_n$ by the number of left-to-right minima. Given $\sg\in\Snn(123)$, we let $\linv(\sg)$ be the number of pairs $(i,j)$ such that $\sg_i$ is a left-to-right minimum, $\sg_j$ is not a left-to-right minimum and $\sg_i>\sg_j$. For a Dyck path $P$, we also let $\linv(P)=\linv(\Psi^{-1}(\sg))$.

Next, we define
\begin{eqnarray}
D_n (s,q,x,y)&: =& \sum_{\sg\in\Snn(123)} s^{\LRmin(\sg)} q^{\occr_{12}(\sg)}  x^{\linv(\sg)} y^{\occr_{231}(\sg)}\mbox{ and }\\
D_{n,k} (q,x,y)&: =& \sum_{\sg\in\Snn(123), \LRmin(\sg)=k} q^{\occr_{12}(\sg)}  x^{\linv(\sg)} y^{\occr_{231}(\sg)},
\end{eqnarray}
then $D_n (s,q,x,y)=\sum_{k=1}^{n}s^k D_{n,k} (q,x,y)$, and we have the following theorem for $D_n (s,q,x,y)$.

\ 
\begin{theorem}\label{theorem:123231}
	$D_{0} (s,q,x,y) =D_{0,0}(q,x,y) =1$. For any $n,k\geq 1$, 
	\begin{equation}
	D_{n,1} (q,x,y) = q^{n-1},\ D_{n,n}(q,x,y)=1,\ D_{n,k}(q,x,y)=0 \mbox{ for }k>n,
	\end{equation}
	and
	\begin{multline}\label{123231}
	D_{n,k}(q,x,y)= x^{n-k} D_{n-1,k-1}(q,x,y) +q^{k} D_{n-1,k}(q,xy,y)\\
	+\sum_{i=2}^{n-1}\sum_{j=\max(1,k+i-n)}^{\min(i-1,k-1)} q^j x^{j(n-i-k+j)} y^{j(n-i)} D_{i-1,j}(qy^{n-i},xy,y) D_{n-i,k-j}(q,x,y).
	\end{multline}
\end{theorem}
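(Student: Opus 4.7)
My plan is to prove the recursion using the Elizalde--Deutsch bijection $\Psi:\Snn(123)\to\mathcal{D}_n$ and decomposing by the first return $r:=\ret(\Psi(\sg))$ of the underlying Dyck path. A key preliminary step is the permutation-level description of $r$: unwinding the construction of $\Psi$ shows that the $R$-step in column $c$ of $\Psi(\sg)$ lies at Dyck-row $n-m_c+1$, where $m_c=\min(\sg_1,\dots,\sg_c)$, so $r$ is the smallest positive integer satisfying $\{\sg_1,\dots,\sg_r\}=\{n-r+1,\dots,n\}$. Setting $\bar A=\red(\sg_1\cdots\sg_r)\in\Sn{r}(123)$ and $\bar B=\red(\sg_{r+1}\cdots\sg_n)\in\Sn{n-r}(123)$, the two blocks use disjoint value ranges (top $r$ and bottom $n-r$), and a direct case analysis on where the indices of each $12$-pair, $231$-triple, or $\linv$-pair fall yields
\begin{align*}
\LRmin(\sg)&=\LRmin(\bar A)+\LRmin(\bar B),\qquad \occr_{12}(\sg)=\occr_{12}(\bar A)+\occr_{12}(\bar B),\\
\linv(\sg)&=\linv(\bar A)+\linv(\bar B)+\LRmin(\bar A)\bigl(n-r-\LRmin(\bar B)\bigr),\\
\occr_{231}(\sg)&=\occr_{231}(\bar A)+\occr_{231}(\bar B)+(n-r)\,\occr_{12}(\bar A).
\end{align*}

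I then split into three cases by $r$. When $r=1$, $\sg_1=n$ and $\bar B\in\Sn{n-1}(123)$ has $k-1$ left-to-right minima, so the four identities collapse to $\occr_{12}(\sg)=\occr_{12}(\bar B)$, $\linv(\sg)=\linv(\bar B)+(n-k)$, $\occr_{231}(\sg)=\occr_{231}(\bar B)$, producing the first summand $x^{n-k}D_{n-1,k-1}(q,x,y)$. When $r=n$, the Dyck path is irreducible, so $\Psi(\sg)=D\Psi(\tau)R$ for a unique $\tau\in\Sn{n-1}(123)$; because $\Psi(\tau)$ is non-empty (it starts with $D$ and ends with $R$), wrapping it by $D\cdots R$ creates no new peaks at the boundaries, and the peaks of $\Psi(\sg)$ coincide with those of $\Psi(\tau)$ shifted up by one diagonal. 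This gives $\LRmin(\sg)=\LRmin(\tau)=k$ and, via the diagonal formula $\occr_{12}=\sum_d c_d\cdot d$ used in the proof of \tref{Qm123}, $\occr_{12}(\sg)=\occr_{12}(\tau)+k$. Reading off \lref{p2} for $D\Psi(\tau)R$ identifies the irreducible lift explicitly: $\sg$ is obtained from $\tau$ by preserving every left-to-right minimum (position and value), inserting the new maximum $n$ at the first non-peak position of $\tau$, and sliding each remaining non-peak value of $\tau$ one slot to the right, so that $\tau$'s rightmost non-peak value lands at the new position $n$. This recipe yields $\linv(\sg)=\linv(\tau)$ and $\occr_{231}(\sg)=\occr_{231}(\tau)+\linv(\tau)$, giving the second summand $q^kD_{n-1,k}(q,xy,y)$.

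For $2\le r\le n-1$, both blocks are non-trivial, so I apply the irreducible lift to $\bar A$ to obtain $\tau\in\Sn{r-1}(123)$ with $j:=\LRmin(\tau)=\LRmin(\bar A)$. Non-emptiness of $\bar A$ and $\bar B$ forces $1\le j$ and $1\le k-j\le n-r$, i.e.\ $\max(1,k+r-n)\le j\le\min(r-1,k-1)$, which is exactly the theorem's index range. Substituting the irreducible identities into the four statistic splittings above and collecting the factors involving only $r$ and $j$ yields
\[
q^{j}\,x^{j(n-r-k+j)}\,y^{j(n-r)}\,D_{r-1,j}\bigl(qy^{n-r},xy,y\bigr)\,D_{n-r,\,k-j}(q,x,y),
\]
which is exactly the third-term summand under the renaming $r=i$. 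The base cases $D_{n,1}=q^{n-1}$ and $D_{n,n}=1$ are immediate from the direct identification of the unique $123$-avoiders with one left-to-right minimum ($1,n,n-1,\dots,2$) and with $n$ left-to-right minima ($n,n-1,\dots,1$).

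The main technical hurdle is the irreducible identity $\occr_{231}(\sg)=\occr_{231}(\tau)+\linv(\tau)$. I plan to establish it bijectively: every $231$-pattern of $\sg$ that is not already a $231$-pattern of $\tau$ must involve either the inserted value $n$ or the new rightmost position $n$, and a careful check against the shift recipe above matches these new patterns one-to-one with pairs $(\ell,m)$ of $\tau$ for which $\ell$ is a peak index, $m$ is a non-peak index, and $\tau_\ell>\tau_m$---i.e., with the $\linv$-pairs of $\tau$. The companion identity $\linv(\sg)=\linv(\tau)$ follows from the same recipe together with the observation that the inserted maximum $n$ can never play the role of the larger element in any $\linv$-pair.
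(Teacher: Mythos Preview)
Your overall strategy matches the paper's proof exactly: decompose $\Psi(\sg)$ at its first return, handle the cases $r=1$, $r=n$, and $2\le r\le n-1$ separately, and use precisely the statistic splittings you list. The paper in fact just \emph{asserts} that ``there are $\linv(P_1)$ more occurrences of the pattern $231$ in $\Psi^{-1}(DP_1R)$ than $\Psi^{-1}(P_1)$'' without further justification, so your attempt to prove $\occr_{231}(\sg)=\occr_{231}(\tau)+\linv(\tau)$ bijectively goes beyond what the paper does. Your companion claim $\linv(\sg)=\linv(\tau)$ is fine: the value-preserving embedding carries each $\linv$-pair of $\tau$ to one of $\sg$, and the new value $n$ cannot sit below any peak.

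The gap is in the bijective sketch for $\occr_{231}$. Your shift recipe is correct, but the value-preserving embedding of $\tau$ into $\sg$ is \emph{not} order-preserving on positions: a non-peak value can jump rightward past a later peak. Hence $231$-patterns of $\tau$ are not automatically $231$-patterns of $\sg$, and your dichotomy ``already a $231$-pattern of $\tau$'' versus ``involves the value $n$ or position $n$'' breaks down. Concretely, take $\tau=3412$ (so $\sg=35142$): the value-set $\{3,4,1\}$ is a $231$ in $\tau$ at positions $(1,2,3)$ but becomes $213$ in $\sg$ (those values now sit at positions $1,4,3$). Here $\tau$ has two $231$-patterns, $\sg$ has three, and \emph{all three} of $\sg$'s $231$-patterns involve the value $5$ or the position $5$; yet $\linv(\tau)=1$. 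So the ``new'' patterns you propose to match with $\linv$-pairs number three, not one, and the bijection as stated cannot work. The identity is still true; you need either a bijection that also accounts for destroyed patterns, or a direct count exploiting the fact that each peak's diagonal index increases by exactly one under the lift $P_1\mapsto DP_1R$.
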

\begin{proof}
	Given $\sg\in\Snn(123)$ such that $\LRmin(\sg)=k$, we let $P=\Psi(\sg)$ be the corresponding Dyck path which has $k$ peaks by \lref{p2}. Suppose that $\ret(\sg)=i$, then similar to \tref{Qm123}, we can write $P=DP_1RP_2$, where $P_1$ is a Dyck path of size $i-1$ and $P_2$ is a Dyck path of size $n-i$. 
	
	If $i=1$, then $P_1$ is empty, and $DP_1R$ is a peak on the main diagonal, thus $P_2$ should be a Dyck path of size $n-1$ with $k-1$ peaks. There are $n-k$ extra linvs between the first peak and the $n-k$ non-peaks in $P_2$, thus the contribution of this case is $x^{n-k} D_{n-1,k-1}(q,x,y)$.
	
	If $i=n$, then $P_2$ is empty, and $P=DP_1R$. $P_1$ should be a Dyck path of size $n-1$ with $k$ peaks. There are $k$ more inversions of $\Psi^{-1}(DP_1R)$ than $\Psi^{-1}(P_1)$, and there are $\linv(P_1)$ more occurrences of the pattern $231$ in  $\Psi^{-1}(DP_1R)$ than $\Psi^{-1}(P_1)$,  thus the contribution of this case is  $q^{k} D_{n-1,k}(q,xy,y)$.
	
	If $1<i<n$, then both $P_1$ and $P_2$ are not empty. Suppose that there are $j$ peaks in $P_1$, then there are $k-j$ peaks in $P_2$. Other than the statistics counted inside $P_1$ and $P_2$, there are $j$ more inversions of $\Psi^{-1}(DP_1R)$ than $\Psi^{-1}(P_1)$, $j(n-i-k+j)$ extra linvs between $P_1$ and $P_2$, and $j(n-i)+(n-i)\occr_{12}(\Psi^{-1}(P_1))+\linv(P_1)$ extra occurrences of the pattern $231$, thus the contribution of this case is $q^j x^{j(n-i-k+j)} y^{j(n-i)} D_{i-1,j}(qy^{n-i},xy,y) D_{n-i,k-j}(q,x,y)$.
	
	Summing over all the cases gives (\ref{123231}).
\end{proof}

Then we can compute $Q^{\{12,231\}}_{123}(t,q,x)=\sum_{n\geq 0} t^n D_n(1,q,1,x)$ using the recursive formula to get
\begin{multline}
Q^{\{12,231\}}_{123}(t,q,x)=1 +t + (1 + q) t^2 + t^3 (1 + q + 2 q^2 + q x) + 
t^4 (1 + q + 2 q^2 + 2 q^3 + q^4 + q x + 2 q^3 x + q x^2 \\
+ 
3 q^2 x^2)+ 
t^5 (1 + q + 2 q^2 + 2 q^3 + 3 q^4 + 2 q^6 + q x + 2 q^3 x + 
2 q^5 x + q x^2 + 3 q^2 x^2 + 5 q^4 x^2 + 2 q^5 x^2 \\+ q x^3 + 
q^2 x^3 + 4 q^3 x^3 + q^4 x^3 + 3 q^2 x^4 + 4 q^3 x^4 + q^4 x^4)+\cdots.
\end{multline}

\section{Applications in pattern popularity}
Let $S$ be a set of permutations and $\ga$ be a permutation pattern. The \emph{popularity} of $\ga$ in $S$, $f_{S}(\ga)$, is defined by
\begin{equation}
f_{S}(\ga):=\sum_{\sg\in S} \occr(\ga).
\end{equation}
Let
\begin{eqnarray}
F_{\ga}(t)&:=&\sum_{n\geq 0} f_{\Snn(132)}(\ga) t^n\mbox{ \ \ and }\\
G_{\ga}(t)&:=&\sum_{n\geq 0} f_{\Snn(123)}(\ga) t^n,
\end{eqnarray}
\cite{Bona} and \cite{Hom} studied the popularity of length two or three patterns in $\Snn{(132)}$ and $\Snn(123)$. 

\ 
\begin{theorem}[B\'{o}na and Homberger]
	Let $C(t):=\sum_{n\geq 0}C_n t^n$ be the generating function of Catalan numbers. Then
	\begin{eqnarray}
	F_{12}(t)&=&\frac{t^2 C^3(t)}{\left(1-2tC(t)\right)^2},\\
	G_{12}(t)&=&\frac{t C^2(t)}{1-2tC(t)}.
	\end{eqnarray}
\end{theorem}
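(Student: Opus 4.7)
The plan is to derive both formulas by differentiating suitable pattern-tracking generating functions from Sections 3--4 at $x=1$ and then simplifying with standard Catalan identities. The key observation is that $f_{\Snn(\la)}(\gamma) = \partial_x Q^{\gamma}_\la(t,x)\big|_{x=1}$, so that $F_{12}(t)$ and $G_{12}(t)$ arise as first derivatives of the generating functions that have already been built earlier in the paper.

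For $F_{12}(t)$ I would apply \tref{Q12} specialized to $x_2=1$: the function $H(t,x) := Q^{12}_{132}(t,x)$ satisfies $H(t,x) = 1 + tH(t,x)H(tx,x)$ and reduces to $H(t,1) = C(t)$. Differentiating this functional equation with respect to $x$ and evaluating at $x=1$ produces a linear equation in $F_{12}(t) = H_x(t,1)$; the chain rule applied to the composed term $H(tx,x)$ also contributes a $tH_t(t,1) = tC'(t)$ piece. Solving, together with the identity $C'(t) = C(t)^2/(1-2tC(t))$ obtained by differentiating $C = 1 + tC^2$, yields the claimed closed form $F_{12}(t) = t^2 C(t)^3/(1-2tC(t))^2$.

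For $G_{12}(t)$ I would use the trivariate refinement $J(t,s,x) := Q^{(2)}_{123}(t,s,x)$ from \tref{Qm123}, which records both the number of left-to-right minima (via $s$) and the number of occurrences of 12 (via $x$). Equation (\ref{Qm1232}) at $m=2$ gives $J(t,s,x) = 1 + t(s-1)J(t,s,x) + tJ(t,sx,x)J(t,s,x)$, with $J(t,1,1) = C(t)$. I would first differentiate in $s$ and evaluate at $(s,x) = (1,1)$ to solve for the popularity of left-to-right minima $J_s(t,1,1) = tC(t)/(1-2tC(t))$. Then differentiating in $x$ at $(s,x) = (1,1)$ produces a linear equation expressing $G_{12}(t) = J_x(t,1,1)$ in terms of the already computed $J_s(t,1,1)$; combining the two and simplifying via $1 - 2tC(t) = \sqrt{1-4t}$ produces the stated closed form.

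The main difficulty is purely technical bookkeeping rather than anything conceptual. The compositional arguments $H(tx,x)$ and $J(t,sx,x)$ each contain two occurrences of the differentiation variable, so the chain rule produces several terms that must be carefully attributed to the correct partial derivative --- in particular one must distinguish $H_t$ from $H_x$, and $J_s$ from $J_x$, before setting variables to $1$. Once these partials are correctly separated and the Catalan identity $C = 1 + tC^2$ is invoked, the remaining algebra collapses in a few lines.
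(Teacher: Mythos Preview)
The paper does not actually prove this theorem; it is quoted as a result of B\'{o}na and Homberger and no argument is supplied. Your differentiation strategy is precisely the technique the paper uses for its \emph{own} theorem immediately afterward (differentiating equations~(\ref{132m2}) and~(\ref{Qm1232}) and setting the $x$-variables equal to $1$), so in spirit your approach matches the paper's methodology exactly, and your derivation of $F_{12}(t)=t^2C^3/(1-2tC)^2$ via \tref{Q12} is correct.

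There is, however, a concrete discrepancy you should be aware of in the $G_{12}$ computation. Carrying out your plan, one differentiates $J(t,s,x)=1+t(s-1)J(t,s,x)+tJ(t,sx,x)J(t,s,x)$ first in $s$ to obtain $J_s(t,1,1)=tC/(1-2tC)$, then in $x$ to obtain
\[
J_x(t,1,1)\;=\;tC(t)\bigl(J_s(t,1,1)+J_x(t,1,1)\bigr)+tC(t)\,J_x(t,1,1),
\]
which solves to
\[
G_{12}(t)\;=\;J_x(t,1,1)\;=\;\frac{tC(t)\,J_s(t,1,1)}{1-2tC(t)}\;=\;\frac{t^2C(t)^2}{(1-2tC(t))^2},
\]
not $tC^2/(1-2tC)$ as printed. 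A direct check at small $n$ (e.g.\ $f_{\Sn{3}(123)}(12)=6$, $f_{\Sn{4}(123)}(12)=29$, read off from the expansion of $Q^{\{12,132\}}_{123}(t,q,1)$ displayed in the paper) confirms that $t^2C^2/(1-2tC)^2$ is the correct generating function and that the printed formula is a misprint. So your method is sound, but you will not ``produce the stated closed form''; you will produce the correct one, and you should flag the typo rather than claim agreement.
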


In this section, we shall apply our results in Section 3 and Section 4 in pattern popularity problems, and we have the following theorem.

\ 
\begin{theorem}
	Let $m>2$ be an integer. Then
	\begin{eqnarray}
	\label{pop1}
	F_{12\cdots m}(t)&=& \frac{t C(t) F_{12\cdots (m-1)}(t)}{1-2tC(t)}, \mbox{ \ \ and}\\
	\label{pop2}
	G_{1m\cdots 2}(t)&=& \frac{t C(t) G_{1(m-1)\cdots 2}(t)}{1-2tC(t)}.
	\end{eqnarray}
\end{theorem}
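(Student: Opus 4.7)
The plan is to derive each popularity generating function by differentiating the appropriate multivariate functional equation from Theorems \ref{theorem:Sn132m} and \ref{theorem:Qm123} and evaluating at $1$, exploiting the fact that for any pattern $\gamma$ the popularity generating function equals the $x$-derivative of the distribution generating function $Q_\la^\gamma(t,x)$ at $x=1$.  The extra variables $x_2,\ldots,x_{m-1}$ in those multivariate recursions are precisely what provides the link between the popularity of $12\cdots m$ and the popularity of $12\cdots(m-1)$, so after a suitable specialization a single differentiation will produce a linear equation whose solution is (\ref{pop1}).

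For (\ref{pop1}), I would start from
\begin{equation*}
Q^{(m)}_{132}(t,x_2,\ldots,x_m) = 1 + t\, Q^{(m)}_{132}(tx_2,x_2x_3,\ldots,x_{m-1}x_m,x_m)\, Q^{(m)}_{132}(t,x_2,\ldots,x_m)
\end{equation*}
and specialize $x_2=\cdots=x_{m-1}=1$.  Writing $A(t,x_m):=Q^{(m)}_{132}(t,1,\ldots,1,x_m)$ and $B(t,x_m):=Q^{(m)}_{132}(t,1,\ldots,1,x_m,x_m)$ (where the last two slots of $B$ are both $x_m$, because the substitution sends $x_{m-1}x_m\mapsto x_m$), this collapses to $A(t,x_m)=1+tB(t,x_m)A(t,x_m)$.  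At $x_m=1$ one has $A(t,1)=B(t,1)=C(t)$, and by the definition of popularity $\partial_{x_m}A(t,1)=F_{12\cdots m}(t)$, while $B$ tracks the joint statistic $\occr_{12\cdots(m-1)}(\sigma)+\occr_{12\cdots m}(\sigma)$ so that $\partial_{x_m}B(t,1)=F_{12\cdots(m-1)}(t)+F_{12\cdots m}(t)$.  Differentiating the specialized equation in $x_m$ and evaluating at $x_m=1$ then gives
\begin{equation*}
F_{12\cdots m}(t) = tC(t)\bigl(F_{12\cdots(m-1)}(t)+F_{12\cdots m}(t)\bigr) + tC(t)\,F_{12\cdots m}(t),
\end{equation*}
and solving for $F_{12\cdots m}(t)$ produces (\ref{pop1}).

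The proof of (\ref{pop2}) is entirely analogous: I would set $s=x_2=\cdots=x_{m-1}=1$ in the functional equation of Theorem \ref{theorem:Qm123}, observe that the $t(s-1)$ term vanishes, and repeat the same single differentiation with $G_{1(m-1)\cdots 2}$ and $G_{1m\cdots 2}$ playing the roles of $F_{12\cdots(m-1)}$ and $F_{12\cdots m}$.  The main subtlety in both cases is the bookkeeping in the chain rule: one must recognize that after the specialization both the $x_{m-1}x_m$ slot and the $x_m$ slot of the inner $Q^{(m)}$ still depend on $x_m$, so that $\partial_{x_m}B(t,1)$ picks up the popularities of \emph{both} the length-$m$ and the length-$(m-1)$ patterns, and this is exactly the mechanism that yields the length-$(m-1)$ popularity on the right-hand side of the claimed recursion.
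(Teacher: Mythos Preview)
Your proposal is correct and follows essentially the same approach as the paper: both differentiate the functional equation (\ref{132m2}) (resp.\ (\ref{Qm1232})) with respect to $x_m$ and evaluate at $x_2=\cdots=x_m=1$ (and $s=1$), using the chain rule to see that the inner $Q^{(m)}$ contributes both an $x_{m-1}$-derivative and an $x_m$-derivative, which produces the $F_{12\cdots(m-1)}$ (resp.\ $G_{1(m-1)\cdots2}$) term. The only difference is cosmetic---you specialize first and then differentiate via the auxiliary functions $A,B$, whereas the paper differentiates first and then specializes---but the resulting linear equation $F_{12\cdots m}=tC(t)\bigl(F_{12\cdots(m-1)}+2F_{12\cdots m}\bigr)$ is identical.
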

\begin{proof}
	Equation (\ref{pop1}) is a consequence of equation (\ref{132m2}).
	To be more explicitly, it is a fact that
	\begin{equation}
	Q^{(m)}_{132}(t,x_2,\ldots,x_m)\big|_{x_2=\cdots=x_m=1}=C(t).
	\end{equation}
	It follows from the definition of $Q^{(m)}_{132}(t,x_2,\ldots,x_m)$ that
	\begin{equation}
	F_{12\cdots m}(t)=\frac{\partial Q^{(m)}_{132}(t,x_2,\ldots,x_m)}{\partial x_m}\bigg|_{x_2=\cdots=x_m=1}.
	\end{equation}
	Taking partial derivative of equation (\ref{132m2}) over $x_m$ and evaluating $x_2,\ldots,x_m$ at 1 give
	\begin{eqnarray}
	F_{12\cdots m}(t)&=&\frac{\partial Q^{(m)}_{132}(t,x_2,\ldots,x_m)}{\partial x_m}\bigg|_{x_2=\cdots=x_m=1}\nonumber\\
	&=&  t\left(\frac{\partial Q^{(m)}_{132}(t,x_2,\ldots,x_m)}{\partial x_m} Q^{(m)}_{132}(t,x_2,\ldots,x_m)\right.  \nonumber\\
	&&+  \frac{\partial Q^{(m)}_{132}(t,x_2,\ldots,x_m)}{\partial x_{m-1}} Q^{(m)}_{132}(t,x_2,\ldots,x_m)     \nonumber\\
	&&\left.+  \frac{\partial Q^{(m)}_{132}(t,x_2,\ldots,x_m)}{\partial x_m} Q^{(m)}_{132}(t,x_2,\ldots,x_m)  \right) \Bigg|_{x_2=\cdots=x_m=1}  \nonumber\\
	&=& t \left( C(t)F_{12\cdots (m-1)}(t) + 2C(t)F_{12\cdots m}(t)\right),
	\end{eqnarray}
	which implies equation (\ref{pop1}).
	
	Equation (\ref{pop2}) is a consequence of equation (\ref{Qm1232}) and can be proved in a similar way. We shall omit the proof of equation (\ref{pop2}).
\end{proof}

\section{Summary and future work}
We have obtained recursions of generating functions of $\Snn(132)$ tracking all patterns of length two, three or four. In fact, it is possible to give a recursion for the generating function $Q_{n,132}^{\ga}(t,x)$ of $\Snn(132)$ tracking the occurrences of a pattern $\ga$ of any length in $\Snn(132)$ if one does enough refinement. 

On $\Snn{(123)}$, we have only obtained recursions of generating functions tracking two patterns of length two, two patterns of length three and the special pattern $1m(m-1)\cdots 2$. The recursions about $\Snn(123)$ tend to be more complicated than those about $\Snn(132)$, and we have not succeeded in computing a recursion for the  generating function tracking the pattern $321$.

We have applied our results about classical pattern distributions in pattern popularity problems, and we have obtained nice results on pattern $1\cdots m$ popularity in $\Snn(132)$ and pattern $1m\cdots 2$ popularity in $\Snn(123)$.

We have noticed other equalities of coefficients of generating functions $Q_{132}^\gamma(t,x)$ and $Q_{123}^\gamma(t,x)$ except equation (\ref{coeff}). For example, 
the number of permutations in $\Snn(123)$ with one occurrence of the pattern $231$ is equal to the number of permutations in $\Snn(231)$ with one occurrence of the pattern $123$, which is equal to $2n-5$; the number of permutations in $\Snn(132)$ with one occurrence of the pattern $3412$ is equal to the number of permutations in $\Snn(132)$ with one occurrence of the pattern $2341$, which is one less than the \thn{2n-5} Fibonacci number. 

We have not studied sets of permutations avoiding patterns of length bigger than three. We shall study such problems in the future.

\acknowledgements
\label{sec:ack}
The first author would  like to thank the second author, Professor Jeff Remmel who passed away recently, for his mentorship, friendship and collaboration.

The authors would also like to thank 
Cheyne Homberger and Professor Brendon Rhoades for helpful discussions.

\nocite{*}
\bibliographystyle{abbrvnat}
\bibliography{classical}
\label{sec:biblio}

\end{document}